\long\def\symbolfootnote[#1]#2{\begingroup
\def\thefootnote{\fnsymbol{footnote}}\footnote[#1]{#2}\endgroup}
\newtheorem{theorem}{Theorem}[section]
\newtheorem{lemma}[theorem]{Lemma}
\newtheorem{thm}[theorem]{Theorem}
\newtheorem{sublemma}[theorem]{Sublemma}
\newtheorem{prop}[theorem]{Proposition}
\newtheorem{cor}[theorem]{Corollary}
\theoremstyle{definition}
\newtheorem{rem}[theorem]{Remark}
\newtheorem{defin}[theorem]{Definition}
\newtheorem{ex}[theorem]{Example}
\newtheorem{quest}[theorem]{Question}
\renewcommand{\H}{\mathbf{H}}
\newcommand{\R}{\mathbf{R}}
\newcommand{\N}{\mathbf{N}}
\begin{document}

\title{Arcs intersecting at most once}

\author[P.~Przytycki]{Piotr Przytycki$^{\dag}$}
\address{Dep.\ of Math.\ \& Stat., McGill University\\
Burnside Hall, Room 1005, 805 Sherbrooke St.\ W\\
Montreal, Quebec, Canada H3A 2K6\\
and Inst.\ of Math., Polish Academy of Sciences\\
 \'Sniadeckich 8, 00-656 Warsaw, Poland}
\email{pprzytyc@mimuw.edu.pl}
\thanks{$\dag$ Partially supported by National Science Centre DEC-2012/06/A/ST1/00259 and NSERC}

\maketitle

\begin{abstract}
\noindent
We prove that on a punctured oriented surface with Euler characteristic $\chi<0$, the maximal cardinality of a set of essential simple arcs that are pairwise non-homotopic and intersecting at most once is $2|\chi|(|\chi|+1)$. This gives a cubic estimate in $|\chi|$ for a set of curves pairwise intersecting at most once on a closed surface. We also give polynomial estimates in $|\chi|$ for sets of arcs and curves pairwise intersecting a uniformly bounded number of times. Finally, we prove that on a punctured sphere the maximal cardinality of a set of arcs starting and ending at specified punctures and pairwise intersecting at most once is $\frac{1}{2}|\chi|(|\chi|+1)$.
\end{abstract}

\section{Introduction}
\label{sec:intro}

Let $S$ be a connected oriented punctured surface of finite type with Euler characteristic $\chi<0$. We start with the following observation.

\begin{rem}
\label{rem:disjoint arcs}
The maximal cardinality of a set of essential simple arcs on $S$ that are pairwise non-homotopic and disjoint is $3|\chi|$. This follows from the fact that every such set can be extended to form an ideal triangulation of the surface. Fixing any hyperbolic metric, the surface $S$ has area~$2\pi|\chi|$, while an ideal triangle has area~$\pi$. Thus there are $2|\chi|$ triangles, and these have $6|\chi|$ sides among which each arc appears twice.
\end{rem}

The main result of the article is a similar formula for arcs pairwise intersecting at most once.

\begin{thm}
\label{thm:arc}
The maximal cardinality of a set $\mathcal A$ of essential simple arcs on $S$ that are pairwise non-homotopic and intersecting at most once is $$f(|\chi|)=2|\chi|(|\chi|+1).$$
\end{thm}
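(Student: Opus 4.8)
\emph{Overall strategy.} The plan is to prove the two matching inequalities separately: exhibit an admissible family (essential, pairwise non-homotopic, pairwise intersecting at most once) of cardinality $f(|\chi|)=2|\chi|(|\chi|+1)$, and then show that no admissible family can be larger. I expect the construction to be the more routine direction and the upper bound to carry the real difficulty.

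\emph{Lower bound.} For the construction I would produce an explicit extremal family. The model case is the once-punctured torus, where $|\chi|=1$ and $f(1)=4$: the three edges of an ideal triangulation together with the opposite diagonal of the associated quadrilateral already give four arcs that pairwise meet at most once. In general I would start from an ideal triangulation, which by Remark~\ref{rem:disjoint arcs} contributes $3|\chi|$ pairwise disjoint edges, and then adjoin further arcs obtained by controlled flips or rotations, chosen so that any two new arcs, and each new arc with each old edge, cross at most once. I expect the cleanest bookkeeping to be inductive, building the extremal family up as the surface is assembled from simpler pieces (adjoining a puncture or a handle), the arithmetic being governed by the increment $f(|\chi|)-f(|\chi|-1)=4|\chi|$; verifying the pairwise intersection bounds is then a direct, if tedious, check.

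\emph{Upper bound, setup.} First I would fix a complete hyperbolic metric and replace every arc of $\mathcal A$ by its geodesic representative, so that all arcs are simultaneously in minimal position and any two meet in at most one interior point. Next I would choose a maximal pairwise disjoint subfamily $\mathcal T\subseteq\mathcal A$; by maximality every remaining arc of $\mathcal A$ must cross $\mathcal T$, and by the standing hypothesis it crosses each edge of $\mathcal T$ at most once. Since $|\mathcal T|\le 3|\chi|$ by Remark~\ref{rem:disjoint arcs}, every arc therefore meets $\mathcal T$ at most $3|\chi|$ times, so it traces a bounded-length trail in the dual graph of $\mathcal T$ and appears inside each complementary region of $\mathcal T$ as a controlled family of embedded segments. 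This caps the local combinatorial complexity of every arc.

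\emph{Counting and the main obstacle.} The decisive step is to convert this bounded local data into the sharp global count. The idea is to assign to each arc a charge located at a specific feature of $\mathcal T$ (for instance the corner it cuts off as it runs into a cusp, read off using the orientation-induced cyclic order of arc-ends at that puncture) and to bound how many arcs can deposit charge at the same feature; summing these local bounds should produce exactly $2|\chi|(|\chi|+1)$. I expect this to be the hard part. A soft version of the argument yields only a bound of order $|\chi|^2$ with a non-optimal constant, whereas extracting the exact value forces one to show that no arrangement beats the extremal example, essentially a tight discharging scheme or an explicit injection of $\mathcal A$ into a set of size $2|\chi|(|\chi|+1)$. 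Two further wrinkles must be absorbed into the same scheme: the case in which $\mathcal A$ contains no full triangulation, so that $\mathcal T$ leaves non-triangular complementary regions, and the degenerate configurations where several arcs share an endpoint at a common puncture.
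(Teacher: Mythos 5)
Your upper bound is where the real content of the theorem lies, and it is exactly the step you leave unproved. Choosing a maximal disjoint subfamily $\mathcal T\subseteq\mathcal A$ and observing that every other arc crosses $\mathcal T$ at most $3|\chi|$ times does not by itself cap $|\mathcal A|$: the number of homotopy classes of arcs meeting each edge of a triangulation at most once is governed by the number of admissible trails in the dual graph, and a priori this is far larger than quadratic in $|\chi|$, so even your claimed ``soft $O(|\chi|^2)$ version'' is not established by the setup you describe. The ``tight discharging scheme or explicit injection'' that you defer to is precisely the missing idea. The paper's mechanism is quite different: to each pair of arcs of $\mathcal A$ that are \emph{consecutive} at a puncture (a \emph{tip}) it attaches an immersed ideal triangle (a \emph{nib}) spanning the cusp sector between them; since each arc heads two tips, the total area of the nibs is $2|\mathcal A|\pi$, and the theorem follows once one shows the union of nibs covers each point $s\in S$ at most $2(|\chi|+1)$ times. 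That multiplicity bound is obtained by joining the tip vertex to each preimage of $s$ by a geodesic ray (a \emph{slit}), proving these slits are embedded and pairwise disjoint off $s$ (Lemmas~\ref{lem:slit_embedded} and~\ref{lem:slit_disjoint}, which use the intersect-at-most-once hypothesis via a lifting argument in $\H^2$), and then counting them as disjoint arcs from the new puncture $s$ to the old punctures on $S$ punctured at $s$, which Lemma~\ref{lem:arc_disjoint} bounds by $2(|\chi|+1)$. Nothing in your outline plays the role of this local-to-global device, so the proposal has a genuine gap rather than an alternative route.

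On the lower bound: your once-punctured torus example is correct, but the general ``controlled flips, built up inductively by increments of $4|\chi|$'' is not a construction one can check. The paper's Example~\ref{ex:arc} is both explicit and short: cut $S$ along $|\chi|+1$ disjoint arcs into a single ideal $(2|\chi|+2)$--gon and take all diagonals of the polygon together with the cut arcs; any two of these meet at most once, and the count is $\frac{(2|\chi|+2)(2|\chi|-1)}{2}+(|\chi|+1)=2|\chi|(|\chi|+1)$. You should replace your inductive sketch with this (or an equally explicit) family.
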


\begin{ex}
\label{ex:arc} This bound is sharp. The surface $S$ can be cut
along disjoint arcs into an ideal polygon $P$. Since the Euler
characteristic of $P$ equals $1$, the number of arcs we have
cut along equals $1-\chi=|\chi|+1$, so that $P$ is a
$(2|\chi|+2)$--gon. We consider the union $\mathcal A$ of the
set of all diagonals of $P$, which has cardinality
$\frac{(2|\chi|+2)(2|\chi|-1)}{2}$, with the set of $|\chi|+1$
arcs we have cut along. In total,
$$|\mathcal A|=\frac{(2|\chi|+2)(2|\chi|-1)}{2}+(|\chi|+1)=(|\chi|+1)\big((2|\chi|-1)+1\big).$$
\end{ex}

We deduce a (non-sharp) cubic bound on curves pairwise intersecting at most once. Let $g$ be the genus of $S$. We now allow $S$ to be closed. Let $f(\cdot)$ be the function from Theorem~\ref{thm:arc}.

\begin{thm}
\label{thm:curve} The cardinality of a set $\mathcal{C}$ of
nonperipheral essential simple closed curves on $S$ that are
pairwise non-homotopic and intersecting at most once is at most
$$g\cdot\big(2f(|\chi|)+1\big)+|\chi|-1.$$
\end{thm}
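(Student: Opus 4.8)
The plan is to induct on the genus $g$ of $S$, cutting along a curve of $\mathcal{C}$ to decrease $g$ while keeping $\chi$ unchanged, and to control the curves lost in the cut via Theorem~\ref{thm:arc}. Write $n$ for the number of punctures, so $|\chi|=2g-2+n$ and $3g-3+n=g-1+|\chi|$ is the number of curves in a pants decomposition. For the base case $g=0$: on a punctured sphere every simple closed curve separates, so two non-homotopic essential curves meet an even number of times; hence a set of curves pairwise intersecting at most once is actually pairwise \emph{disjoint}, and extending it to a pants decomposition gives $|\mathcal{C}|\le 3g-3+n=|\chi|-1$. The same remark settles, for every $g$, the case in which all curves of $\mathcal{C}$ are separating: then $\mathcal{C}$ consists of disjoint curves, so $|\mathcal{C}|\le g-1+|\chi|$, which is below the asserted bound. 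So from now on assume $g\ge 1$ and fix a \emph{nonseparating} $a\in\mathcal{C}$.

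Let $S_a$ be the surface obtained from $S$ by cutting along $a$ and collapsing the two new boundary circles to punctures $p_1,p_2$; it is connected, of genus $g-1$, with two more punctures, so $\chi(S_a)=\chi(S)<0$. Partition $\mathcal{C}\setminus\{a\}$ as $\mathcal{C}_0\cup\mathcal{C}_1$ according to whether $i(\,\cdot\,,a)$ equals $0$ or $1$. The curves of $\mathcal{C}_0$ can be realized disjointly from $a$, hence descend to $S_a$ as simple essential nonperipheral curves that are still pairwise non-homotopic and pairwise intersecting at most once: reglueing sends a disk, annulus, or homotopy in $S_a$ to one in $S$, and a curve of $\mathcal{C}_0$ homotopic into $p_1$ or $p_2$ would be homotopic to $a$ in $S$, which is impossible. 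By the inductive hypothesis applied to $S_a$,
$$|\mathcal{C}_0|\ \le\ (g-1)\big(2f(|\chi|)+1\big)+|\chi|-1.$$

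To bound $\mathcal{C}_1$, cut each $\beta\in\mathcal{C}_1$ along $a$: since $i(a,\beta)=1$, it becomes a simple essential arc $\widehat\beta$ of $S_a$ joining $p_1$ to $p_2$, and for $\beta,\beta'\in\mathcal{C}_1$ the arcs $\widehat\beta,\widehat\beta'$ meet at most once (their unique possible crossing in $S$ may be taken off $a$). Thus the image of $\Phi\colon\beta\mapsto[\widehat\beta]$ is a set of pairwise non-homotopic simple essential arcs on $S_a$ pairwise intersecting at most once, so by Theorem~\ref{thm:arc} it has at most $f(|\chi(S_a)|)=f(|\chi|)$ elements. I claim $\Phi$ is at most two-to-one. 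If $\Phi(\beta)=\Phi(\beta')$, then $\beta$ and $\beta'$ result from reglueing the \emph{same} arc across an annular collar of $a$ with winding numbers differing by some integer $k$, so $\beta'=T_a^{\,k}(\beta)$, where $T_a$ is the Dehn twist about $a$; since $i(a,\beta)=1$, the linear lower bound for intersection numbers of iterated twists gives $i\big(T_a^{\,j}\beta,T_a^{\,k}\beta\big)=i\big(\beta,T_a^{\,k-j}\beta\big)\ge|k-j|$, so a fibre of $\Phi$ with three elements would contain two curves meeting at least twice, a contradiction. Hence $|\mathcal{C}_1|\le 2f(|\chi|)$, and
$$|\mathcal{C}|\ \le\ 1+|\mathcal{C}_0|+|\mathcal{C}_1|\ \le\ 1+(g-1)\big(2f(|\chi|)+1\big)+|\chi|-1+2f(|\chi|)\ =\ g\big(2f(|\chi|)+1\big)+|\chi|-1,$$
which closes the induction.

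The step I expect to require the most care is the two-to-one claim: one must rigorously identify the curves lying over a fixed arc with a segment of the $T_a$-orbit of a single curve, tracking how the endpoints of $\widehat\beta$ slide around the boundary circles upon reglueing, and then invoke (or reprove) the estimate $i(\beta,T_a^{\,m}\beta)\ge|m|\,i(a,\beta)^2$. The remaining ingredients---the parity argument in genus $0$, and the verification that cutting and reglueing preserve the properties ``simple'', ``essential'', ``nonperipheral'', ``non-homotopic'' and ``intersecting at most once''---are routine.
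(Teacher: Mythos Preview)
Your proof is correct and follows essentially the same route as the paper: induction on $g$ at fixed $\chi$, handling the all-separating case by disjointness, and for the inductive step cutting along a nonseparating curve, bounding the disjoint part by induction and the intersecting part by Theorem~\ref{thm:arc} together with the Dehn-twist two-to-one argument. You supply more detail than the paper does (the parity reason separating curves must be disjoint, the verification that $\mathcal{C}_0$ descends well to $S_a$, and the explicit bound $i(\beta,T_a^{\,m}\beta)\ge|m|$), but the architecture is identical.
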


The question about the maximal cardinality of $\mathcal{C}$ was
asked by Farb and Leininger. Apart from the intrinsic beauty of
the problem, one motivation to compute this value is that it is
one more than the dimension of the \emph{$1$--curve complex},
see \cite[Quest 1]{S}. This complex is constructed by taking
the flag span of the graph whose vertices correspond to curves
on $S$ and edges join the vertices corresponding to curves
intersecting at most once, as opposed to disjoint curves for
the usual curve complex. The $1$--curve complex might have
better properties than the usual one (like the Rips complex
versus the Cayley complex of a hyperbolic group), in particular
it might be contractible \cite[Quest 2]{S} as opposed to the
usual curve complex \cite{H}. Another interpretation of the
condition on~$\mathcal{C}$, this time in the terms of the
mapping class group of $S$, is that Dehn twists around the
curves in $\mathcal{C}$ pairwise either commute or satisfy the
braid relation.

For $S$ a torus, $\mathcal{C}$ consists of at most $3$ curves.
For $S$ a closed genus 2 surface, the maximal cardinality of
$\mathcal{C}$ is 12, as proved by Malestein, Rivin, and Theran
\cite{MRT}. They also gave examples of $\mathcal{C}$ with size
quadratic in $|\chi|$ for arbitrary surfaces. Note that these
sets are not the sets of systoles (shortest curves) for any
hyperbolic metric as it was shown by Parlier that sizes of such
sets grow subquadratically \cite{P}. Until our work, the best
upper bound for the cardinality of $\mathcal{C}$ was only
exponential \cite{MRT}. Farb and Leininger obtained the lower
quadratic and upper exponential bounds as well \cite{L}.

The question of Farb and Leininger is a particular case of a
problem studied by Juvan, Malni\v{c} and Mohar \cite{JMM}, who
allowed the curves to intersect at most a fixed number of times
$k$. They proved that there exists an upper bound on the
cardinality of $\mathcal{C}$ if the surface is fixed. We
establish upper bounds polynomial in $|\chi|$ for the sets of
either curves or arcs, where for the arcs we give also lower
bounds of the same degree. We do not have, even for arcs and
$k=2$, a guess of an explicit formula.

\begin{thm}
\label{thm:multi}
The maximal cardinality of a set $\mathcal A$ of essential simple arcs on $S$ that are pairwise non-homotopic and intersecting at most $k$ times grows as a polynomial of degree $k+1$ in $|\chi|$.
\end{thm}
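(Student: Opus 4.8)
The theorem combines a lower and an upper bound of the same degree $k+1$, and I would treat them separately. For the \emph{lower bound} I generalize Example~\ref{ex:arc}. Cut $S$ along $|\chi|+1$ disjoint arcs into an ideal $(2|\chi|+2)$--gon $P$ and start from its $\Theta(|\chi|^2)$ diagonals, which pairwise intersect at most once. Enlarge this family by letting each diagonal make a bounded number of ``detours'', re-entering $P$ along the cutting arcs a total of at most $k-1$ further times --- equivalently, applying a bounded amount of partial Dehn twisting about a fixed collection of pairwise disjoint curves chosen to meet the diagonals as rarely as possible. Each such bounded modification costs a factor $\Theta(|\chi|)$ in the count, producing $\Theta(|\chi|^{k+1})$ arcs; the real work is to organise the twisting so that the extra intersections it introduces are controlled, the pairwise geometric intersection numbers stay at most $k$, and the arcs remain pairwise non-homotopic.

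For the \emph{upper bound} I would induct on $|\chi|$, the base cases being a finite check or Theorem~\ref{thm:arc} read as the case $k=1$. Given $\mathcal A$ pairwise intersecting at most $k$, pick $\alpha_0\in\mathcal A$ and cut $S$ along it to obtain $S'$ with $|\chi(S')|=|\chi(S)|-1$, splitting the induction over the at most two components if the cut disconnects. An arc of $\mathcal A$ disjoint from $\alpha_0$ descends to a simple essential arc of $S'$, the descended arcs are still pairwise non-homotopic and pairwise intersect at most $k$ times, so by induction there are at most $P_k(|\chi|-1)$ of them, where $P_k$ is the degree-$(k+1)$ polynomial being established. Since $P_k(|\chi|)-P_k(|\chi|-1)$ has degree $k$, the induction closes once one proves the key estimate: the number of arcs of $\mathcal A$ that cross $\alpha_0$ is $O_k(|\chi|^{k})$. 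Cutting along $\alpha_0$ breaks each such arc into at most $k+1$ simple sub-arcs of $S'$, each incident to the short new boundary $\partial_0$ consisting of the two copies of $\alpha_0$, and sub-arcs of distinct arcs of $\mathcal A$ still meet at most $k$ times; so it is enough to bound by $O_k(|\chi|^{k})$ a pairwise-$\le k$ family of simple arcs each emanating from a fixed bounded-complexity piece of the boundary.

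This last bound is the main obstacle, and it is where the hypothesis ``at most $k$'' enters in an essential, non-inductive way. The mechanism I would exploit is forced parallelism near $\partial_0$: two arcs leaving the short arc $\partial_0$ that never diverge by more than $k$ transverse intersections must run parallel along $\partial_0$ for a long stretch, and if such a family were larger than a degree-$k$ polynomial, iterating this would force two of its members to be homotopic --- the quantitative form of the reason that only linearly many disjoint arcs emanate from a single puncture in Remark~\ref{rem:disjoint arcs}. Making this precise with exactly the exponent $k$, presumably by organising the arcs into boundedly many train tracks near $\partial_0$ and running a packing count on their twisting coordinates, is the technical heart of the theorem.
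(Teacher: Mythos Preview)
Your proposal is an outline that explicitly defers both hard steps, and neither is close to the paper's actual argument.

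For the lower bound, the paper does not twist diagonals of a polygon. It gives a direct construction (Example~\ref{ex:multi}): place the punctures on $k+1$ concentric circles in a cylinder, take arcs from one end to the other crossing each circle once, and observe that two such arcs can meet at most once per sub-cylinder, hence at most $k$ times. The count $\big(\frac{|\chi|}{k+1}+1\big)^{k+1}-1$ is immediate. Your ``detours via partial Dehn twists'' idea is plausible but you have not verified that the intersection numbers stay bounded by $k$ or that the arcs remain pairwise non-homotopic; the paper's construction avoids this bookkeeping entirely.

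For the upper bound, the paper inducts on $k$, not on $|\chi|$, and the induction step is exactly the nib/slit argument already used for Theorem~\ref{thm:arc}. The key observation is Lemma~\ref{lem:multi}: if arcs in $\mathcal A$ pairwise intersect at most $k$ times, then slits at two points $n,n'$ with $\nu(n)=\nu(n')$ intersect at most $k-1$ times outside their common endpoint (the proof is literally the same as Lemma~\ref{lem:slit_disjoint}). Hence the multiplicity of $\nu$ at any $s\in S$ is bounded by the maximal size of a $(k{-}1)$--system of arcs on $S-s$, which by induction is $\le C(k{-}1)(|\chi|+1)^k$. The area inequality $2|\mathcal A|\pi\le 2\pi|\chi|\cdot C(k{-}1)(|\chi|+1)^k$ closes the induction.

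Your induction on $|\chi|$ would instead require the estimate ``at most $O_k(|\chi|^k)$ arcs of a $k$--system can cross a fixed arc $\alpha_0$'', which you correctly identify as the heart of the matter but do not prove. The ``forced parallelism'' and train-track sketch is not an argument: cut sub-arcs may be inessential or mutually homotopic on $S'$, reconstruction from an ordered tuple of sub-arcs and their gluing data along $\partial_0$ is what threatens to blow up, and nothing you wrote controls that. The paper sidesteps all of this by changing the induction variable and reusing the geometric machinery of Section~\ref{sec:nibs}.
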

\begin{cor}
\label{cor:multi}
The maximal cardinality of a set $\mathcal C$ of nonperipheral essential simple closed curves on $S$ that are pairwise non-homotopic and intersecting at most $k$ times
grows at most as a polynomial of degree $k^2+k+1$ in $|\chi|$.
\end{cor}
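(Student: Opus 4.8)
The plan is to reduce the statement about curves to Theorem~\ref{thm:multi} by the standard device of cutting along curves to produce arcs, paying attention to how the Euler characteristic and the intersection number change under this operation. First I would take a set $\mathcal C$ of curves as in the statement, of size $N$, and choose any curve $c_0\in\mathcal C$. Cutting $S$ along $c_0$ yields a surface $S'$ with $|\chi(S')|=|\chi(S)|$ (cutting along a nonseparating curve does not change $\chi$; if $c_0$ were separating one works with a component, but in fact it is cleaner to keep $S'$ connected by choosing $c_0$ nonseparating, which is possible unless $S$ is a sphere, a case that is handled directly). Each other curve $c\in\mathcal C$ meets $c_0$ in at most $k$ points, so $c$ is cut into at most $k+1$ arcs in $S'$ — each of which is essential and simple — and two curves $c,c'$ that met at most $k$ times in $S$ give rise to families of arcs in $S'$ that pairwise intersect at most $k$ times. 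Two subarcs of the \emph{same} curve $c$ are disjoint. Thus from $\mathcal C\setminus\{c_0\}$ we obtain a set of at most $(k+1)(N-1)$ essential simple arcs in $S'$ pairwise intersecting at most $k$ times; but they need not be pairwise non-homotopic, so we only conclude that the number of homotopy classes they represent is bounded by the function in Theorem~\ref{thm:multi}, which is $O(|\chi|^{k+1})$.

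The main obstacle is exactly this loss of injectivity: distinct curves, or distinct subarcs, may become homotopic in $S'$, so a single homotopy class of arcs in $S'$ may be hit many times. To control the multiplicity I would iterate the cutting construction. Having cut along $c_0$, I still have $N-1$ curves; I can select from them a curve $c_1$ whose image in $S'$ is nonperipheral and essential (a curve of $\mathcal C$ becomes peripheral in $S'$ only if it was isotopic to $c_0$ or cobounded an annulus or pants with $c_0$, which can happen for at most boundedly many curves of $\mathcal C$ since they are pairwise non-homotopic and a controlled number of homotopy classes lie in a fixed annular or pants neighbourhood), and cut again, now getting $S''$ with $|\chi(S'')|=|\chi(S)|$ still. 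After $j$ steps I have a surface $S^{(j)}$, still of the same $|\chi|$, and the curves $c_0,\dots,c_{j-1}$ have been turned into boundary; each remaining curve $c$ is cut into at most $(k+1)^j$ arcs, pairwise intersecting at most $k$ times. The point of iterating is: if two distinct curves $c,c'$ of $\mathcal C$ remained homotopic in $S$ after cutting along $c_0,\dots,c_{j-1}$, then every subarc of $c$ would be homotopic to a subarc of $c'$ only in a very constrained way; and more usefully, the number of curves of $\mathcal C$ that can be pairwise homotopic to each other \emph{after} one cut is bounded in terms of $|\chi|$ by an elementary combinatorial argument (curves homotopic in $S'$ but not in $S$ all cross $c_0$ and are determined, up to finitely many choices controlled by $|\chi|$, by their intersection pattern with $c_0$ together with the homotopy class in $S'$).

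Concretely, I expect the cleanest route is the following counting. In $S'$ the set of homotopy classes of essential simple arcs pairwise intersecting at most $k$ times has size at most $g_k(|\chi|)$, a polynomial of degree $k+1$ by Theorem~\ref{thm:multi}. The curves of $\mathcal C\setminus\{c_0\}$ that are \emph{disjoint} from $c_0$ remain essential simple \emph{curves} in $S'$ pairwise intersecting at most $k$ times; recursing on them (genus drops or $|\chi|$ drops under cutting along a disjoint curve is not quite right — rather, I would recurse on $|\chi|$, cutting $S'$ into simpler pieces) controls their number. The curves crossing $c_0$ each contribute between $1$ and $k+1$ arcs in $S'$; distinct such curves give arc families with pairwise intersection $\le k$, and I claim at most $k\cdot g_k(|\chi|)$ of them can be accommodated, because two crossing curves that induced the same \emph{multiset} of $\le k+1$ homotopy classes of arcs in $S'$ would have to be equal (reconstructing the curve from how its arcs are glued across $c_0$, there are at most $(k+1)!\cdot(\text{something polynomial in }|\chi|)$ gluings, since the endpoints on $c_0$ are cyclically ordered and there are $O(k\cdot|\chi|)$ of them). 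Multiplying the per-cut factor, which is polynomial of degree at most $1$ in $|\chi|$ beyond the $g_k$ factor, by $g_k(|\chi|)$ of degree $k+1$, and noting the recursion on $|\chi|$ has depth $O(|\chi|)$ contributing one more factor of $|\chi|$ at worst — and being generous with the genus-many cuts needed to reach the arc setting, which costs a further polynomial factor — one obtains a bound that is polynomial in $|\chi|$ of degree at most $(k+1)+(k+1)\cdot k = k^2+k+1$ after bookkeeping, matching the claimed exponent $k^2+k+1$. The delicate point, and the one I would write out carefully, is the exact bookkeeping of how many times one may cut and how much the degree grows at each cut; the geometric inputs (cutting along a curve preserves $|\chi|$, essential simple arcs stay essential and simple, intersection numbers do not increase) are routine, but organising the induction so the degree lands on $k^2+k+1$ rather than something larger is the real content.
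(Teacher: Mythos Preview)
Your overall strategy---cut along a curve, apply Theorem~\ref{thm:multi} to the resulting arcs, control how many curves can give the same arc data, and iterate on the disjoint curves---is exactly the paper's approach. But the execution has a real gap at the reconstruction step, and your bookkeeping does not actually produce the exponent $k^2+k+1$.

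The key count you mishandle is the number of curves in $\mathcal C$ that intersect the chosen curve $\alpha$. A closed curve meeting $\alpha$ in $m\le k$ points is cut into $m$ arcs (not $m+1$; it is a circle, not an interval). These arcs lie in a set of size $g_k(|\chi|)=O(|\chi|^{k+1})$ by Theorem~\ref{thm:multi}. The clean observation, which you gesture at parenthetically but never use, is that the curve is determined by the \emph{ordered $m$-tuple} of these arc classes, up to a power $D^l$ of the Dehn twist along $\alpha$ with $|l|\le k$. Hence the number of curves crossing $\alpha$ is at most a constant (depending on $k$) times $g_k(|\chi|)^{k}$, which has degree $k(k+1)$ in $|\chi|$. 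Your claim that ``at most $k\cdot g_k(|\chi|)$ of them can be accommodated'' is not justified and is far too small; it would give degree $k+1$, and nothing in your text bridges the gap to $k(k+1)$.

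The iteration is then simpler than you make it: the curves \emph{disjoint} from $\alpha$ remain closed curves on $S-\alpha$, still pairwise intersecting at most $k$ times, so you cut again along one of them. Since the curves you cut along are pairwise disjoint, the process terminates after at most $|\chi|-1+g<2|\chi|$ steps. Summing the degree-$k(k+1)$ contribution over $O(|\chi|)$ steps adds one to the degree, giving $k(k+1)+1=k^2+k+1$. Your proposed recursion ``on $|\chi|$'' with additional ``genus-many cuts'' is redundant, and your displayed arithmetic $(k+1)+(k+1)\cdot k$ actually equals $(k+1)^2=k^2+2k+1$, not $k^2+k+1$; the exponent you wrote down was not derived from your argument.
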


Note that in Corollary~\ref{cor:multi} the upper bound does not
match Aougab's recent lower bound, which is polynomial of
degree $\lfloor \frac{k+1}{2}\rfloor+1$ \cite{A}, improving the
degree $\frac{k}{4}$ from \cite{JMM}. Thus in
Corollary~\ref{cor:multi}, similarly as in
Theorem~\ref{thm:curve}, there is still room for improvement.
Our upper bound is also not efficient if we fix the surface and
vary $k$, compared to \cite{JMM}.

Finally, we prove the following variation of Theorem~\ref{thm:arc}.

\begin{thm}
\label{thm:punctured_main} Let $S$ be a punctured sphere, and
let $p,p'$ be its (not necessarily distinct) punctures. Then
the maximal cardinality of a set $\mathcal{A}$ of essential
simple arcs on $S$ that are starting at $p$ and ending at $p'$,
and pairwise intersecting at most once is
$$\frac{1}{2}|\chi|(|\chi|+1).$$
\end{thm}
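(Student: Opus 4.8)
The plan is to establish the two inequalities separately. Write $m=|\chi|=n-2$ for the number of punctures of $S$ distinct from $p,p'$, so that the asserted value is $\binom{m+1}{2}=\frac12 m(m+1)$. Throughout I fix a complete hyperbolic metric and realize every arc by its geodesic representative, so that any two members of $\mathcal A$ meet minimally and any subfamily is in pairwise minimal position; this lets me invoke the bigon criterion freely. The base case $m=1$ is the thrice-punctured sphere, on which there is a unique essential arc between two given punctures, matching $\binom{2}{2}=1$.

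For sharpness I would mimic Example~\ref{ex:arc}. Place the $m$ punctures $z_1,\dots,z_m$ in convex position on a circle with $p,p'$ on the two sides, and for each pair $1\le i\le j\le m$ take the arc $C_{ij}$ from $p$ to $p'$ that separates the consecutive block $\{z_i,\dots,z_j\}$ from the remaining punctures along the prescribed convex route. There are exactly $\binom{m+1}{2}$ such blocks. One then checks, exactly as two diagonals of a convex polygon meet at most once, that two block-arcs $C_{ij},C_{kl}$ cross at most once: nested or unlinked blocks yield disjoint or singly-crossing arcs, while two linked blocks force precisely one crossing. Verifying that distinct blocks give non-homotopic arcs (note that the block is \emph{not} recoverable from the induced partition alone) and that the count is attained is the routine part of this direction.

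For the upper bound I would induct on $m$ by cutting. Since cutting the sphere along an arc between two punctures raises $\chi$ by one, slicing $S$ along a fixed $a_0\in\mathcal A$ produces a disc $D$ carrying the same $m$ punctures, with $p,p'$ and the two copies of $a_0$ on $\partial D$, so that $|\chi(D)|=m-1$. Split $\mathcal A\setminus\{a_0\}$ into the arcs $\mathcal D$ disjoint from $a_0$ and the arcs $\mathcal X$ crossing $a_0$ once. The arcs of $\mathcal D$ descend to arcs in $D$ joining the two boundary marked points and still pairwise cross at most once, so they should be governed by a disc version of the statement carrying the analogous bound $\binom{|\chi(D)|+1}{2}=\binom{m}{2}$; the arcs of $\mathcal X$ each cut into two boundary-to-boundary pieces of $D$. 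Since $\binom{m+1}{2}=\binom{m}{2}+m$, closing the induction amounts to the single combined estimate $1+|\mathcal X|+|\mathcal D|\le\binom{m}{2}+m$.

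The main obstacle is precisely this combined estimate, and in particular the control of $\mathcal X$. One cannot bound $|\mathcal X|$ on its own: on the four-times punctured sphere the three arcs of a Farey triangle pairwise cross once, so $|\mathcal X|=m$ while $\mathcal D$ is empty, yet the total is still $\binom{m+1}{2}$. Thus the heart of the matter is a trade-off — many arcs crossing $a_0$ must force correspondingly fewer disjoint arcs — and I expect to prove it by analysing the cyclic order in which the arcs of $\mathcal X$ emanate from the common endpoints $p,p'$ and the order of their crossings along $a_0$, ruling out the interleavings that would create empty bigons at $p$ or $p'$. This is where the shared endpoints are decisive and where the factor-$4$ gain over the generic bound of Theorem~\ref{thm:arc} originates. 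Two further points must be arranged: phrasing the induction uniformly over discs with marked boundary points, so that $\mathcal D$ genuinely feeds back into the hypothesis, and treating the case $p=p'$, where the members of $\mathcal A$ are loops at a single puncture and \emph{essential} must exclude loops bounding a once-punctured disc; both are handled by the same cut-and-induct scheme with the evident modifications.
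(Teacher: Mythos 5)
Your lower-bound construction is fine in spirit (it is essentially the paper's Examples~\ref{ex:punctured1} and~\ref{ex:puncture2}), but the upper bound has a genuine gap exactly where you say it does. Your induction cuts along an arc $a_0\in\mathcal A$ and reduces everything to the combined inequality $1+|\mathcal X|+|\mathcal D|\le\binom{m}{2}+m$, and you correctly observe that $|\mathcal X|$ and $|\mathcal D|$ cannot be bounded separately, so the theorem hinges entirely on a trade-off between the two. You do not prove this trade-off: ``analysing the cyclic order in which the arcs of $\mathcal X$ emanate from $p,p'$ and the order of their crossings along $a_0$'' is a plan, not an argument, and it is precisely the hard combinatorial content of the statement. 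There is a second unaddressed problem in the same step: after cutting along $a_0$, each of $p,p'$ gives rise to two ideal points on $\partial D$, so the arcs of $\mathcal D$ join various pairs among four marked boundary points; the inductive hypothesis you would need is therefore a statement about discs with several marked boundary points, which is not the theorem being proved and would itself require proof (you flag this but do not supply it). As written, neither half of the key estimate is established.

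The paper takes a different route that avoids the trade-off altogether: it inducts by \emph{forgetting} a puncture $r\notin\{p,p'\}$ rather than cutting along an arc of $\mathcal A$. The induced map $\mathcal A\to\bar{\mathcal A}$ to arcs on the smaller surface is well defined and injective except on a set of \emph{exceptional} arcs, and the resulting error term is controlled by associating to the exceptional arcs a family $\mathcal I$ of pairwise intersecting chords on a small circle around $r$, which is bounded by $|\chi|$ via a Helly-type lemma (Lemma~\ref{lem_n_points}). This converts the interaction between crossing and disjoint arcs, which is where your argument stalls, into a clean one-dimensional combinatorial bound. To complete your approach you would have to either prove the combined estimate directly (together with the disc version of the theorem needed for $\mathcal D$), or switch to the forgetting-a-puncture induction.
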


\textbf{Organisation.}
The outline of the proof of Theorem~\ref{thm:arc} and the deduction of Theorem~\ref{thm:curve} is given in Section~\ref{sec:nibs}. The two key lemmas are proved in Section~\ref{sec:slits}. In Section~\ref{sec:multi} we prove Theorem~\ref{thm:multi} and Corollary~\ref{cor:multi}.
We prove Theorem~\ref{thm:punctured_main} in Section~\ref{sec:punctured}.

\medskip

\textbf{Acknowledgement.} I thank Chris Leininger for the
discussions on this problem during my stay in Urbana-Champaign,
and for his comments on the first draft of the article. I thank
Marcin Sabok for the short proof of Lemma~\ref{lem_n_points}. I
also thank Igor Rivin and the referees for all the remarks and
corrections that allowed to improve the article.

\section{Nibs}
\label{sec:nibs}

In this Section, we give the proof of Theorem~\ref{thm:arc}, up
to two lemmas postponed to Section~\ref{sec:slits}. Then we
deduce Theorem~\ref{thm:curve}. A rough idea is that we
generalise the argument of Remark~\ref{rem:disjoint arcs}. We
replace disjoint triangles of the triangulation with
\emph{nibs}, which have also area $\pi$ but might
(self-)intersect. However, we have a linear bound of the number
of nibs at a point (Proposition~\ref{prop:nib_overlap}).

As in the introduction, $S$ is an oriented punctured surface
with Euler characteristic $\chi<0$. An \emph{arc} is a map
$(-\infty,\infty)\rightarrow S$ that converges to punctures at
infinities. An arc is \emph{simple} if it is embedded and
\emph{essential} if it is not homotopic into a puncture
neighbourhood (called a \emph{cusp}). Unless stated otherwise,
all arcs are simple and essential and sets of arcs consist of
arcs that are pairwise non-homotopic. Assume that we have a set
$\mathcal{A}$ of arcs on $S$. We fix an arbitrary complete
hyperbolic metric on $S$ and assume that all arcs in
$\mathcal{A}$ are realised as geodesics. In particular they are
pairwise in \emph{minimal position}, that is minimising the
number of intersection points in their homotopy classes.

\begin{defin}
\label{def:tip} A \emph{tip} $\tau$ of $\mathcal{A}$ is a pair
$(\alpha,\beta)$ of oriented arcs in $\mathcal{A}$ starting at
the same puncture and consecutive. That is to say that there is
no other arc in $\mathcal{A}$ issuing from this puncture in the
clockwise oriented cusp sector from $\alpha$ to $\beta$.

Let $\tau=(\alpha,\beta)$ be a tip and let $N_\tau$ be an open
abstract ideal hyperbolic triangle with vertices $a,t,b$. The
tip $\tau$ determines a unique local isometry $\nu_\tau\colon
N_\tau\rightarrow S$ sending $ta$ to $\alpha$, $tb$ to $\beta$
and mapping a neighbourhood of $t$ to the clockwise oriented
cusp sector from $\alpha$ to $\beta$. We call $\nu_\tau$ the
\emph{nib} of $\tau$.
\end{defin}

\begin{figure}
\begin{center}
\includegraphics[width=0.35\textwidth]{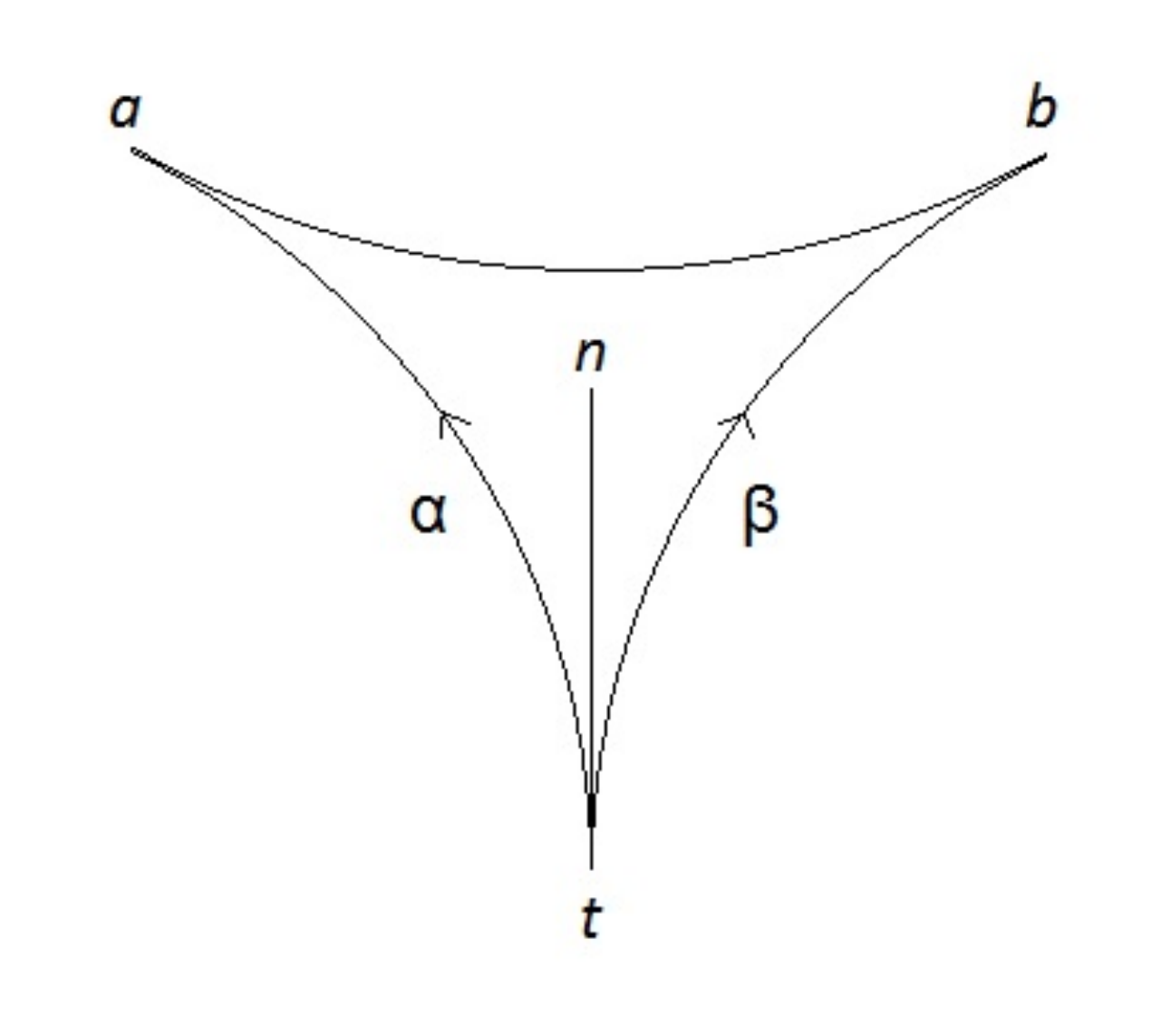}
\end{center}
\caption{A tip, its nib and a slit}
\label{fig:nib}
\end{figure}

\begin{prop}
\label{prop:nib_overlap} Suppose that the arcs in $\mathcal{A}$
pairwise intersect at most once. Let $\mathcal{\nu}\colon
N=\bigsqcup_\tau N_\tau\rightarrow S$ be the disjoint union of
all the nibs $\nu_\tau$. Then for each $s\in S$ the preimage
$\nu^{-1}(s)$ consists of at most $2(|\chi|+1)$ points.
\end{prop}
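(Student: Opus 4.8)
The point $s\in S$ lies in the image of a nib $\nu_\tau$ for a tip $\tau=(\alpha,\beta)$ in one of two essentially different ways: either $s$ is far from the cusp, in which case a preimage in $N_\tau$ records that $s$ lies between $\alpha$ and $\beta$ along a geodesic ray, or $s$ is in the cusp sector itself. My first step is to make this dichotomy precise by introducing, for each preimage point $x\in\nu_\tau^{-1}(s)$, the geodesic ray $\rho_x$ in $N_\tau$ from the ideal vertex $t$ through $x$; its image is a geodesic ray in $S$ emanating from the puncture of $\tau$ and passing through $s$, lying (weakly) between $\alpha$ and $\beta$. The key combinatorial gadget will be the \emph{slit}: the initial segment of $\rho_x$ from $t$ up to the first point where it meets $\alpha\cup\beta$ (if it does). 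I expect the proof to split the count of preimages according to whether the associated ray's slit is ``short'' (the ray hits one of $\alpha,\beta$ before reaching $s$, equivalently $s$ lies beyond a bounding arc) or ``long'' (the slit reaches $s$, so $s$ is visible from the puncture inside the cusp sector without crossing $\alpha$ or $\beta$).

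Next I would bound each type separately. For the ``long-slit'' preimages: such a preimage gives a geodesic segment from a puncture to $s$ that crosses no arc of $\mathcal A$ in its interior. Two such segments ending at $s$ coming from tips at the \emph{same} puncture must be distinct rays out of that puncture, hence separated by at least one arc of $\mathcal A$ issuing from that puncture — but then that arc would have to be crossed, a contradiction unless the segments coincide or the arc is $\alpha$ or $\beta$ itself; a short argument of this flavour should show at most one long-slit preimage per puncture, giving at most (number of punctures) such preimages, and the number of punctures is at most $|\chi|+1$ (with equality only for a sphere-like situation). For the ``short-slit'' preimages: here the ray through $x$ crosses, say, $\alpha$ at a point before $s$. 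I would assign to $x$ the arc $\alpha\in\mathcal A$ and the side of $\alpha$ on which $s$ lies, and argue that this assignment, together with the puncture of $\tau$, nearly determines $x$. The ``intersect at most once'' hypothesis is what makes this work: it should be one of the two postponed lemmas of Section~\ref{sec:slits} that the number of tips whose nib reaches $s$ via a short slit through a fixed arc, on a fixed side, is controlled, and summing over arcs and the two sides of each arc — using that $|\mathcal A|$ is itself bounded in terms of $|\chi|$, or more likely using a direct local argument around $s$ — produces the complementary count. Adding the two contributions should give exactly $2(|\chi|+1)$.

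The main obstacle, and the place where the hypothesis ``pairwise intersecting at most once'' is genuinely used (rather than just ``simple''), is controlling the short-slit preimages: a priori many different tips, at many different punctures, could send a slit through the fixed arc $\alpha$ to land on the same side of $\alpha$ near $s$, and one must rule out this accumulation. I expect this is handled by one of the two lemmas postponed to Section~\ref{sec:slits} — presumably a statement that two distinct tips cannot have slits that cross the same arc of $\mathcal A$ and then run together to $s$ without the tips' arcs violating the single-intersection bound with $\alpha$ or with each other. The remaining steps (the long-slit count, the bookkeeping of punctures versus $|\chi|$, and combining the bounds) are comparatively routine Euler-characteristic and planarity arguments. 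I would set up the slit formalism carefully first, state the two lemmas I need, and then assemble the count; the delicate combinatorics of slits crossing a common arc is what I would defer to the dedicated section.
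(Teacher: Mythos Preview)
Your plan diverges from the paper's argument and, as written, has a genuine gap. The paper does not split preimages into ``long-slit'' and ``short-slit'' types at all. Instead, the slit at a preimage $n\in N_\tau$ is simply the image in $S$ of the geodesic ray from $t$ to $n$ inside $N_\tau$: a ray from a puncture of $S$ to $s$. The two postponed lemmas say exactly: (i) each slit is embedded in $S$ (Lemma~\ref{lem:slit_embedded}), and (ii) slits at distinct preimages of $s$ are \emph{pairwise disjoint} in $S$ except at the common endpoint $s$ (Lemma~\ref{lem:slit_disjoint}). Once these are granted, the proposition is immediate: puncture $S$ at $s$ to obtain $S'$ with $|\chi(S')|=|\chi|+1$; the slits are then pairwise disjoint simple arcs on $S'$ joining the new puncture $\{s\}=\mathcal P_1$ to the old punctures $\mathcal P_2$; by Lemma~\ref{lem:arc_disjoint} there are at most $2(|\chi|+1)$ of them.

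Your dichotomy creates two concrete problems. First, the long-slit bound ``at most one per puncture'' yields at most the number of punctures, but a punctured sphere has $|\chi|+2$ punctures, not $|\chi|+1$, so the arithmetic already fails there. Second, and more seriously, your short-slit bound assigns to each such preimage an arc of $\mathcal A$ and a side, and then needs to sum over $\mathcal A$ --- but $|\mathcal A|$ is precisely what the proposition is being used to control, so this is circular; the ``direct local argument around $s$'' you gesture at is exactly what Lemma~\ref{lem:slit_disjoint} provides, and once you have it the whole long/short distinction is unnecessary. The single idea you are missing is that \emph{all} slits to $s$ are mutually disjoint, which collapses the count into a disjoint-arc problem on $S-s$ handled by Lemma~\ref{lem:arc_disjoint}.
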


Here are the ingredients of the proof of Proposition~\ref{prop:nib_overlap}.

\begin{lemma}
\label{lem:arc_disjoint} Suppose that we have a partition of
the set of punctures of $S$ into $\mathcal{P}_1$ and
$\mathcal{P}_2$. The maximal cardinality of a set of pairwise
non-homotopic disjoint essential simple arcs that start in
$\mathcal{P}_1$ and end in $\mathcal{P}_2$ is $2|\chi|$.
\end{lemma}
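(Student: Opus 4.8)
The plan is to prove both inequalities through ideal triangulations, exploiting the fact that a collection of disjoint arcs running from $\mathcal{P}_1$ to $\mathcal{P}_2$ is essentially the set of \emph{bichromatic} edges (edges whose two endpoints lie in different parts) of an ideal triangulation extending it.

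For the upper bound I would start with an arbitrary such collection $\mathcal{A}$ and extend it to an ideal triangulation $T$ of $S$, which by Remark~\ref{rem:disjoint arcs} has $2|\chi|$ triangles and $3|\chi|$ edges. Every arc of $\mathcal{A}$ is then a bichromatic edge of $T$. The key local observation is that each triangle has at most two bichromatic sides: among its three ideal vertices two share a colour by pigeonhole, so at least one side is monochromatic. Writing $b(\Delta)$ for the number of bichromatic sides of a triangle $\Delta$ and summing over all triangles counts each bichromatic edge exactly twice, whence the number of bichromatic edges is $\tfrac12\sum_\Delta b(\Delta)\le \tfrac12\cdot 2\cdot(2|\chi|)=2|\chi|$. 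Since $\mathcal{A}$ injects into the bichromatic edges of $T$, this gives $|\mathcal{A}|\le 2|\chi|$.

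For the lower bound (assuming, as we must, that both $\mathcal{P}_1$ and $\mathcal{P}_2$ are nonempty, as otherwise no such arc exists) it suffices to exhibit a triangulation with no \emph{monochromatic} triangle. Indeed, since the number of colour changes around a triangle is even, $b(\Delta)\in\{0,2\}$; so if every triangle satisfies $b(\Delta)\ge 1$ then $b(\Delta)=2$ throughout and the count above becomes the equality $2|\chi|$, producing a disjoint family of $2|\chi|$ bichromatic edges. To build such a triangulation I would cut $S$ along disjoint arcs into a single ideal polygon $P$ as in Example~\ref{ex:arc}; because every puncture occurs as a vertex of $P$ and both colours are present on $S$, the vertices of $P$ carry both colours, and a triangulation of $P$ with no monochromatic triangle descends to one of $S$.

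The only real work, hence the main obstacle, is the purely combinatorial claim that a $2$-coloured ideal polygon on $n\ge 3$ vertices, both of whose colours occur, can be triangulated with no monochromatic triangle. I would prove this by induction on $n$. For $n=3$ the single triangle is bichromatic. For $n\ge 4$, if one colour occurs at a unique vertex $v$ I triangulate by fanning from $v$, so that every triangle contains $v$ together with two vertices of the opposite colour and is therefore bichromatic; otherwise both colours occur at least twice, and I locate an adjacent pair $v_i,v_{i+1}$ of distinct colours, cut off the bichromatic ear $v_{i-1}v_iv_{i+1}$, and apply induction to the $(n-1)$-gon obtained by deleting $v_i$, which still carries both colours since the colour of $v_i$ survives. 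The case split is exactly what guarantees that no colour is lost upon passing to the smaller polygon, which is the point requiring care.
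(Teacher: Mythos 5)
Your proof is correct, but it takes a genuinely different route from the paper's. The paper simply takes the collection to be \emph{maximal} among disjoint arcs joining $\mathcal{P}_1$ to $\mathcal{P}_2$, observes that the complementary components are then ideal squares (every side of such a component is bichromatic, so its ideal vertices alternate colours; a $2k$-gon with alternating colours and $k\geq 3$ admits a bichromatic diagonal, contradicting maximality), and counts by area: $|\chi|$ squares of area $2\pi$, four sides each, every arc on two squares, hence $2|\chi|$ arcs — one computation that delivers the upper and lower bounds simultaneously. You instead complete the collection to a full ideal triangulation and double-count bichromatic edges against the local bound of two bichromatic sides per triangle; this yields the upper bound without having to analyse the complementary regions of a maximal system, at the cost of separate work for the lower bound, namely your inductive triangulation of a $2$-coloured ideal polygon with no monochromatic triangle (which is correct, including the case split ensuring both colours survive the ear removal). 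Your explicit caveat that the lower bound needs both $\mathcal{P}_1$ and $\mathcal{P}_2$ nonempty is a point the paper leaves implicit; note also that only the upper bound is actually used in the proof of Proposition~\ref{prop:nib_overlap}, where $\mathcal{P}_1=\{s\}$.
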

\begin{proof}
Without loss of generality we can assume that the set of arcs is maximal. Then all the complementary components are ideal squares, so that each of them has area $2\pi$. Since the area of $S$ is $2\pi|\chi|$, there are $|\chi|$ such squares. Each square has $4$ sides and each arc is a side of two squares. Thus there are $2|\chi|$ arcs.
\end{proof}

\begin{defin}
\label{def:slit} Let $n\in N_\tau$ be a point in the domain of
a nib. The \emph{slit} at $n$ is the restriction of $\nu_\tau$
to the geodesic ray in $N_\tau$ joining $t$ with $n$. See
Figure~\ref{fig:nib}.
\end{defin}

By a \emph{geodesic ray} we will always mean a geodesic joining a point at infinity or a puncture with a point in the interior of $\H^2$ or $S$.
We postpone the proofs of the following two lemmas to Section~\ref{sec:slits}.

\begin{lemma}
\label{lem:slit_embedded}
A slit is an embedding.
\end{lemma}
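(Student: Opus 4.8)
The plan is to show that a slit $\sigma = \nu_\tau|_{[t,n]}$, where $\tau = (\alpha,\beta)$ and $n \in N_\tau$, cannot have a self-intersection. Suppose for contradiction that $\sigma$ is not injective. Since $\sigma$ is a concatenation of a piece of a geodesic ray emanating from the cusp, and since $\nu_\tau$ is a local isometry, the image $\sigma$ is locally geodesic in $S$; any self-intersection of $\sigma$ would therefore produce a geodesic arc or geodesic loop in $S$ that is traced out twice by $\sigma$ near the crossing. First I would use this to extract from $\sigma$ a closed geodesic or a geodesic bigon: two distinct subrays of $[t,n]$ whose $\nu_\tau$-images share an endpoint in the interior of $S$ and run parallel into the cusp, bounding an immersed region.

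Next I would push this region back through the local isometry $\nu_\tau$. Because $N_\tau$ is simply connected (an ideal triangle) and $\nu_\tau$ is a local isometry, the developing-map/lifting argument shows that any such bigon or monogon in the image lifts to $N_\tau$; but $N_\tau$ carries the complete hyperbolic metric of the ideal triangle, which is $\mathrm{CAT}(0)$ and contains no nontrivial geodesic bigons, no geodesic monogons, and no closed geodesics. Concretely, the geodesic ray from $t$ to $n$ in $N_\tau$ is embedded by convexity, so the only way $\sigma$ can fail to be injective is that $\nu_\tau$ itself identifies two distinct points of this ray — i.e.\ the self-intersection is genuinely created by the immersion $\nu_\tau$ wrapping $N_\tau$ onto $S$.

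To rule that out I would use minimal position of the arcs in $\mathcal A$. The two sides $ta$ and $tb$ of $N_\tau$ map to the geodesics $\alpha$ and $\beta$, which are simple and in minimal position; a self-intersection of $\sigma$ would in particular yield (by taking $n$ on or near a side, or by a limiting argument as $n$ ranges over $N_\tau$) either a self-intersection of one of $\alpha,\beta$, or an excess intersection between $\alpha$ and $\beta$, or a geodesic bigon between subarcs of $\alpha,\beta$ — each impossible since the arcs are geodesics in minimal position and a hyperbolic surface has no geodesic bigons. For an interior point $n$ the slit is a geodesic segment spiralling out of the cusp between $\alpha$ and $\beta$, and the same bigon criterion applied to the sub-triangle spanned by $t,n$ and a point on $\alpha$ (or $\beta$) gives the contradiction.

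The main obstacle I expect is the bookkeeping in the case that $n$ lies in the interior of $N_\tau$: here $\sigma$ is not a subarc of $\alpha$ or $\beta$, so the contradiction must come from a geodesic bigon one of whose sides is $\sigma$ itself and the other side built from pieces of $\alpha$, $\beta$, and possibly another translate of $\sigma$. Making this precise — identifying exactly which immersed bigon appears and checking it is non-degenerate — is the delicate point; once a genuine geodesic bigon (or monogon) in $S$ is produced, the standard fact that hyperbolic surfaces admit none closes the argument.
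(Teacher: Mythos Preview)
Your proposal has a genuine gap at exactly the point you flag as ``delicate.'' The no-bigons principle on a hyperbolic surface says that two distinct geodesic arcs with the same endpoints cannot be homotopic rel endpoints; it does \emph{not} say that a single geodesic ray from a cusp must be embedded. Indeed, on any hyperbolic surface there are plenty of geodesic rays from a cusp that self-intersect --- a self-intersection produces a geodesic \emph{loop} (a ``monogon'' with a corner), not a bigon, and such loops certainly exist. So the general fact you invoke does not close the argument. Likewise, the limiting argument (``take $n$ near a side'') goes the wrong way: embeddedness of the boundary slits $\alpha,\beta$ only tells you that slits sufficiently close to them are embedded, not that all interior slits are. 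There is no continuity principle forcing a self-intersection of an interior slit to persist as $n$ moves to the boundary. Finally, the step where you ``lift the bigon back to $N_\tau$'' is not available: $\nu_\tau$ is a local isometry but not a covering map, so arbitrary loops or discs in $S$ need not lift.

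What the paper actually does is what your sketch is missing: it works in the universal cover $\H^2$, so that a self-intersection of the slit becomes an intersection of the lifted ray $\gamma\subset N_\tau$ with a translate $g(\gamma)$ for some nontrivial $g\in\pi_1S$. The whole content of the proof is a short dynamical analysis of where the fixed point(s) of $g$ must lie on the circle at infinity relative to $t,g(t)$ and the endpoint $q$ of $g(\gamma)$. This pins down that one of the triangle vertices $x\in\{a,b\}$ lies in the arc $[g(t),q]$, and then the monotonicity of $g$ on that arc forces $t,g(t),x,g(x)$ to be cyclically ordered --- i.e.\ the geodesic $tx$ crosses its $g$-translate, so the arc $\alpha$ or $\beta$ it projects to is not simple. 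That fixed-point/ordering argument is the mechanism that transfers the self-intersection from the slit to a side of the nib; your outline names the desired conclusion but supplies no such mechanism.
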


Actually, it can be proved that if $\alpha$ and $\beta$ are
disjoint, then the entire nib embeds. Otherwise it is at worst
$2$ to $1$. We emphasise that in Lemma~\ref{lem:slit_embedded}
we do not assume that the arcs in $\mathcal{A}$ pairwise
intersect at most once.

\begin{lemma}
\label{lem:slit_disjoint}
Suppose that the arcs in $\mathcal{A}$ pairwise intersect at most once. If for distinct $n,n'\in N$ we have $\nu(n)=\nu(n')$, then the images in $S$ of the slits at $n,n'$ are disjoint except at the endpoint.
\end{lemma}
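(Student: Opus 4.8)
The plan is to prove Lemma~\ref{lem:slit_disjoint} by analysing how two slits sharing the same image point $s=\nu(n)=\nu(n')$ can meet. Let $\tau=(\alpha,\beta)$ and $\tau'=(\alpha',\beta')$ be the tips carrying $n$ and $n'$ respectively (possibly $\tau=\tau'$), and write $\sigma,\sigma'$ for the images in $S$ of the slits at $n,n'$. By Lemma~\ref{lem:slit_embedded} each $\sigma,\sigma'$ is an embedded geodesic segment from a puncture to $s$. First I would observe that the initial portion of $\sigma$ runs inside the cusp sector of $\tau$ and along (a sub-ray of) either $\alpha$ or $\beta$ until it first leaves; more precisely, the slit, being a geodesic ray in the ideal triangle $N_\tau$ from the ideal vertex $t$, maps to a geodesic ray in $S$ emanating from the puncture, and near the puncture it lies between $\alpha$ and $\beta$.

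Next, suppose for contradiction that $\sigma$ and $\sigma'$ meet at a point $x\neq s$. The strategy is to derive from this an illegal double intersection between two arcs of $\mathcal{A}$. The key geometric input is that a slit is a geodesic ray issuing from a puncture and, as a geodesic in a hyperbolic surface, two such rays issuing from punctures meet at most once unless they coincide on a sub-ray (they cannot cross twice since the universal cover lifts them to geodesic lines meeting at most once). So either $\sigma$ and $\sigma'$ share the common puncture and agree near it, or they issue from distinct punctures. In the first case, since both are sub-rays of the union $\alpha\cup\beta$ (resp.\ $\alpha'\cup\beta'$) near that puncture and all arcs are in minimal position realised as geodesics, agreeing near the puncture forces $\sigma$ and $\sigma'$ to run along a common arc, say $\alpha=\alpha'$; then the fact that both slits terminate at $s$ but emanate into the sectors of $\tau$ and $\tau'$ pins down the configuration and one checks the slits coincide up to $s$, contradicting that they meet only at a point $x\ne s$ or else giving $n,n'$ on the same geodesic, which together with $\nu(n)=\nu(n')$ contradicts distinctness after unwinding. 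In the second case $\sigma,\sigma'$ cross transversally at $x$, and also both pass through $s$; I would then follow each slit past $s$ a little way inside its nib to produce, from $\sigma\cup(\text{tail of }\alpha)$ and similarly for $\sigma'$, two of the arcs among $\alpha,\beta,\alpha',\beta'$ that must intersect at both (something near) $x$ and (something near) $s$ — more carefully, the two arcs of $\mathcal{A}$ whose sub-rays contain the portions of $\sigma,\sigma'$ beyond $x$ would have to cross twice, violating the hypothesis that arcs in $\mathcal{A}$ pairwise intersect at most once.

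The main obstacle is the bookkeeping in this last step: the slit $\sigma$ is not itself an arc of $\mathcal{A}$, only its initial sub-ray lies on $\alpha$ or $\beta$, and after the slit leaves that arc it wanders through $S$ carrying no a~priori relation to $\mathcal{A}$. So the real content is to show that the point $s$ of coincidence already forces a relation: at $s$ we have two sheets of nibs overlapping, and I would use the structure of the nib map $\nu_\tau$ — a local isometry from an ideal triangle — to argue that near $s$ the slit $\sigma$ is cut out by the two arcs $\alpha,\beta$ bounding $N_\tau$, in the sense that $\sigma$ is determined by the combinatorics of which sides of which arcs it is on, and similarly for $\sigma'$. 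Comparing these two local pictures at the single point $s$, together with the at-most-once intersection hypothesis applied to the relevant pairs among $\{\alpha,\beta,\alpha',\beta'\}$ (and possibly using Lemma~\ref{lem:arc_disjoint} or the minimal-position property to rule out extra crossings), one concludes that $\sigma$ and $\sigma'$ cannot separate and re-meet, i.e.\ they are disjoint away from $s$. I expect the clean way to organise this is to pass to the universal cover $\H^2$, lift $\sigma,\sigma'$ to geodesic rays $\tilde\sigma,\tilde\sigma'$ sharing a lift $\tilde s$ of $s$, and show directly that if they also shared a transverse intersection then two of the lifted arcs would cross twice in $\H^2$ — which is impossible — handling the degenerate coincidence case (where a lift of $\alpha$ equals a lift of $\alpha'$, etc.) separately and elementarily.
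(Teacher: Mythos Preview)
Your overall instinct---lift to $\H^2$ with $n$ identified to $n'$, and produce from an extra intersection of the slits a pair of arcs of $\mathcal{A}$ that intersect at least twice---is the same as the paper's. But the mechanism you propose for linking the slits to the arcs does not work, and the actual mechanism is missing from your sketch.

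First, a factual correction: the slit does \emph{not} run along a sub-ray of $\alpha$ or $\beta$. It is the geodesic ray in the open ideal triangle $N_\tau$ from the ideal vertex $t$ to the interior point $n$; near the puncture it lies strictly between $\alpha$ and $\beta$, not on either. So the arguments that begin ``only its initial sub-ray lies on $\alpha$ or $\beta$'' or ``agreeing near the puncture forces $\sigma$ and $\sigma'$ to run along a common arc'' have no content. Likewise, the assertion that two geodesic rays from punctures in $S$ ``meet at most once unless they coincide on a sub-ray'' is false on the surface (it is essentially the statement of the lemma); it is only true for a single pair of lifts in $\H^2$, and the extra intersection in $S$ you are worried about comes from a \emph{different} lift, so this observation does not help.

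The missing idea is a homotopy argument. After lifting so that $n=n'$, one does a short case analysis on the cyclic order of $t,a,b,t',a',b'$ on $\partial\H^2$ to locate $x\in\{a,b\}$ and $x'\in\{a',b'\}$ (or, in a degenerate case, $t$ and $t'$ themselves) so that the geodesics $tx$ and $t'x'$ cross at some point $m\in\H^2$. Now comes the point you do not have: homotope the slit $tn$ to the ray $tm$ through geodesic rays $t\,\cdot$, and simultaneously $t'n$ to $t'm$, with the common endpoint moving along the segment $nm$. At each time the two rays project to embedded rays on $S$ (by Lemma~\ref{lem:slit_embedded}) and hence to arcs on the surface $S$ punctured at the moving endpoint; since those arcs are geodesic they are in minimal position, and minimal-position intersection number is a homotopy invariant. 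Thus if the slits at $n,n'$ met outside $s$, then the projections of $tx$ and $t'x'$---which \emph{are} arcs of $\mathcal{A}$---would meet outside the projection of $m$, hence at least twice. This is the contradiction. Your attempts to ``follow each slit past $s$'' or to read off the arcs from the local picture at $s$ do not produce this; you need to move the endpoint from $s$ to a genuine intersection point of two arcs of $\mathcal{A}$ and invoke invariance of intersection number along the way.
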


\begin{proof}[Proof of Proposition~\ref{prop:nib_overlap}]
Let $S'$ be the surface obtained from $S$ by introducing an
additional puncture at $s$. Then the absolute value of the
Euler characteristic of $S'$ is $|\chi|+1$. Consider the
collection of all slits at preimages of $s$, and let
$\mathcal{S}$ be the set of corresponding arcs on $S'$. The
arcs in $\mathcal{S}$ are simple by
Lemma~\ref{lem:slit_embedded} and pairwise disjoint by
Lemma~\ref{lem:slit_disjoint}. The set of punctures of $S'$
decomposes into $\mathcal{P}_1=\{s\}$ and $\mathcal{P}_2$,
which are the punctures inherited from the punctures of $S$.
Each arc in $\mathcal{S}$ joins $\mathcal{P}_1$ with
$\mathcal{P}_2$, hence by Lemma~\ref{lem:arc_disjoint} we have
$|\mathcal{S}|\leq 2(|\chi|+1)$.
\end{proof}

\begin{proof}[Proof of Theorem~\ref{thm:arc}]
Each arc in $\mathcal{A}$ is the first arc of exactly two tips, depending on its oriention, so the area of $N$ equals $2|\mathcal{A}|\pi$. The area of $S$ equals $2\pi|\chi|$. By Proposition~\ref{prop:nib_overlap}, the map $\nu\colon N\rightarrow S$ is at most $2(|\chi|+1)$ to $1$, so we have $$2|\mathcal{A}|\pi\leq 2\pi|\chi|\cdot 2(|\chi|+1).$$
\end{proof}

\begin{quest}
Example~\ref{ex:arc} does not exploit all configurations of
${2}|\chi|(|\chi|+1)$ arcs pairwise intersecting at most once.
An example of another configuration is obtained by viewing the
four-punctured sphere $S$ as the boundary of a tetrahedron $T$
with vertices removed. Consider the following set of 12 arcs on
$S$. The first 6 arcs are the $6$ edges of $T$. Each of the
other $6$ arcs is obtained by taking the midpoint $m$ of one of
the edges and concatenating at $m$ the medians of the two faces
of $T$ containing~$m$.

What can one say about the space of all configurations of ${2}|\chi|(|\chi|+1)$ arcs pairwise intersecting at most once?
\end{quest}

We now deduce Theorem~\ref{thm:curve}. A closed curve on $S$ is
\emph{simple} if it is embedded, \emph{essential} if it is
homotopically nontrivial, and \emph{nonperipheral} if it is not
homotopic into a cusp. In what follows, unless stated
otherwise, all curves are closed, simple, essential,
nonperipheral and sets of curves consist of curves that are
pairwise non-homotopic. We fix an arbitrary hyperbolic metric
on $S$ and realise all curves as geodesics, so that they are
pairwise in minimal position.

\begin{proof}[Proof of Theorem~\ref{thm:curve}]
Note that if all the curves in $\mathcal{C}$ are separating,
then they are disjoint. The theorem follows since a set of
disjoint separating curves has cardinality bounded by
$|\chi|-1$. Henceforth we will assume that there is a
non-separating curve in $\mathcal{C}$.

We prove the bound by induction on $g$ assuming $\chi$ is
fixed. If $g=0$, then all curves are separating, a case which
we have already discussed. If $g>0$, choose a non-separating
curve $\alpha$ in $\mathcal{C}$. Then $\mathcal{C}-\{\alpha\}$
partitions into the set $\mathcal{C}_\mathrm{dis}$ of curves
disjoint from $\alpha$ and the set $\mathcal{C}_\mathrm{int}$
of curves intersecting $\alpha$. Let $S'$ be the punctured
surface obtained from $S$ by cutting along $\alpha$. Note that
the Euler characteristic of $S'$ equals the Euler
characteristic $\chi$ of $S$ and the genus of $S'$ is one less
than that of $S$. By the inductive hypothesis we have
$|\mathcal{C}_\mathrm{dis}|\leq (g-1)(2f(|\chi|)+1)+|\chi|-1$.

Each curve in
$\mathcal{C}_\mathrm{int}$ is cut to an arc on $S'$. Two different curves cut to the same arc differ by a power of the Dehn twist $D$ along $\alpha$. Since curves differing by $D^l$ intersect in $l$ points, there might be at most $2$ such curves in $\mathcal{C}_\mathrm{int}$ for each arc on $S'$.
Thus by Theorem~\ref{thm:arc} we have $|\mathcal{C}_\mathrm{int}|\leq 2f(|\chi|)$. Hence
\begin{align*}
|\mathcal{C}|=1+|\mathcal{C}_\mathrm{dis}|+|\mathcal{C}_\mathrm{int}|&\leq 1+(g-1)(2f(|\chi|)+1)+|\chi|-1 + 2f(|\chi|)=\\
&=g\cdot(2f(|\chi|)+1)+|\chi|-1.
\end{align*}
\end{proof}

\section{Slits}
\label{sec:slits}

In this section we complete the proof of Theorem~\ref{thm:arc}, by proving Lemmas~\ref{lem:slit_embedded} and~\ref{lem:slit_disjoint} about slits.

\begin{proof}[Proof of Lemma~\ref{lem:slit_embedded}]
We identify the universal cover of $S$ with the hyperbolic
plane $\H^2$ and $\pi_1S$ with the deck transformation group.
Orient the circle at infinity. For points $x,x'$ at infinity we
will use the notation $[x,x']$ for the oriented sub-interval of
the circle at infinity, and notation $xx'$ for the geodesic arc
inside $\H^2$. Choose a lift of $\nu_\tau$ to $\H^2$, and
identify its image with $N_\tau$. Extend the slit to a ray
$\gamma$ joining $t$ with the side $ab$ of $N_\tau$. We need to
show that the interior of $\gamma$ embeds in $S$.

\begin{figure}
\begin{center}
\includegraphics[width=0.6\textwidth]{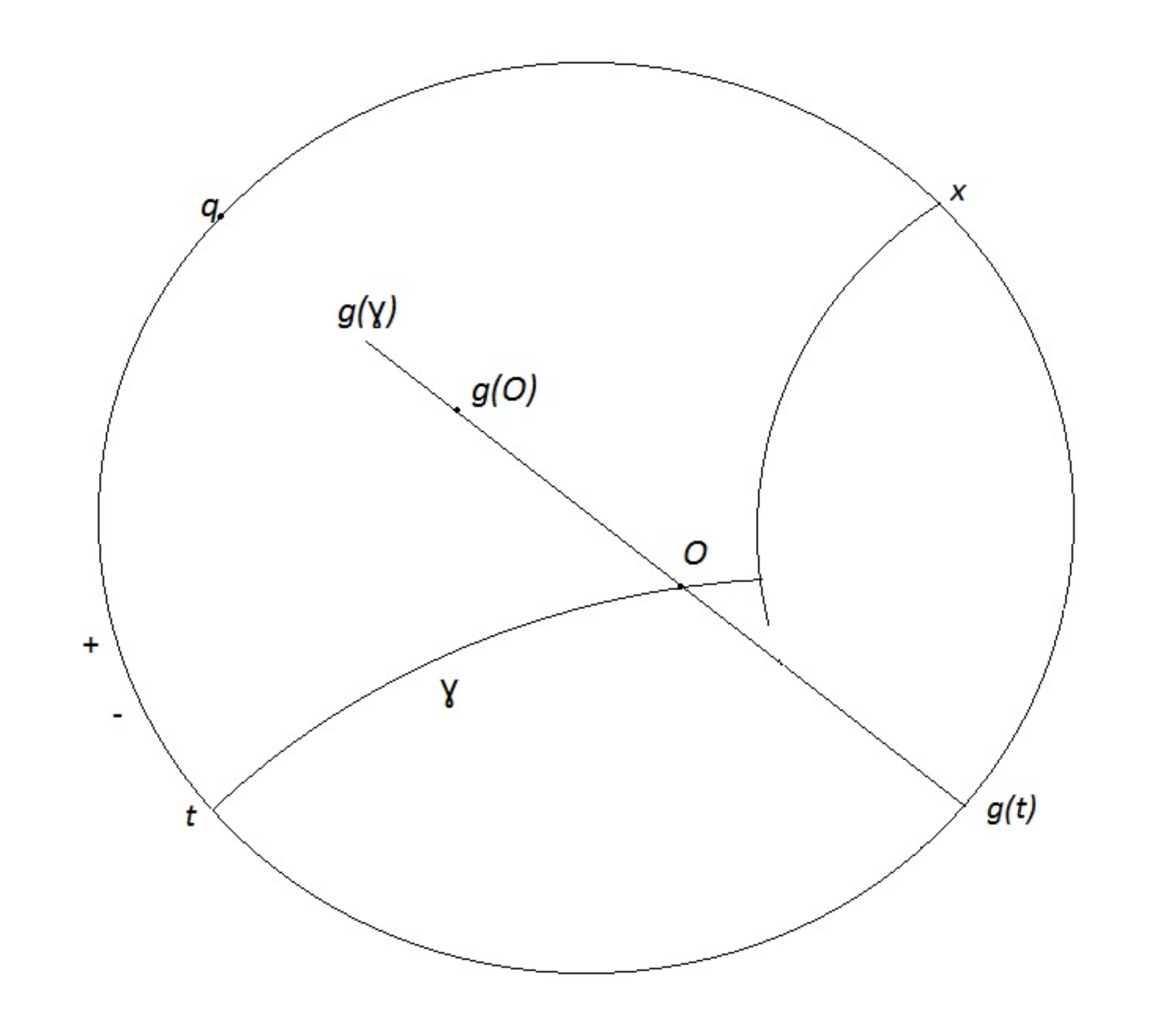}
\end{center}
\caption{Here we must take $x=a$, since $b$ lies on the wrong side of $g(t)q$}
\label{fig:lem1}
\end{figure}

Suppose that $\gamma$ intersects its translate $g(\gamma)$ in a
point $O$ for some nontrivial $g\in\pi_1S$. Without loss of
generality assume that $g(O)$ is farther from $g(t)$ than $O$
on $g(\gamma)$. Let $q$ be the point at infinity that is the
endpoint of the geodesic extension of $g(\gamma)$. See
Figure~\ref{fig:lem1}. We analyse where are the fixed point(s)
of $g$. Since $O$ and $g(O)$ lie in that order on $g(t)q$, in
the case where $g$ is hyperbolic both fixed points lie on one
side of $g(t)q$, and on that side the repelling point separates
the attracting point from $g(t)$. Hence $t$ also has to lie on
that side, between the repelling point and $g(t)$. Without loss
of generality assume that $t,g(t),q$ are cyclically ordered. We
conclude that the fixed points lie in the interval $[q,t]$.
Moreover, $g$ restricts on the interval $[g(t),q]$ to an
increasing function into $[g(t),t]$. The same holds in the case
where $g$ is parabolic.

There is $x\in \{a,b\}$ that lies in the interval $[g(t),q]$. Then the points $t, g(t), x, g(x)$ lie at the circle at infinity in that cyclic order. This means that the projection of the arc $tx$ to $S$ self-intersects, which is a contradiction with the fact that it belongs to $\mathcal{A}$.
\end{proof}

In the proof of Lemma~\ref{lem:slit_disjoint} we will need the following. We parametrise a ray $(-\infty,0]\rightarrow S$ so that the limit at $-\infty$ is a puncture and the image of $0$ is a point on $S$.

\begin{sublemma}
\label{sublem:intersection}
Let $H,H'\colon(-\infty, 0]\times[-1,1]$ be homotopies of rays on $S$, such that
\begin{itemize}
\item
$H(0,\cdot)=H'(0,\cdot)$,
\item
arcs $H(\cdot, y)$ and $H'(\cdot,y)$ are in minimal position on the surface $S-H(0,y)$ for $y=-1,1$,
\item
rays $H(\cdot, y)$ and $H'(\cdot,y)$ are embedded for $y\in [-1,1]$.
\end{itemize}
Then the arcs $H(\cdot, -1)$ and $H'(\cdot, -1)$ on $S-H(0,-1)$ intersect the same number of times as the arcs
$H(\cdot, 1)$ and $H'(\cdot, 1)$ on $S-H(0,1)$.
\end{sublemma}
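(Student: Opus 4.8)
The plan is to show that the intersection number of the two arcs cannot jump as $y$ varies over $[-1,1]$, by tracking the intersection points through the homotopy. Write $X=\{(y,s,s')\in[-1,1]\times(-\infty,0]^2 : H(s,y)=H'(s',y)\}$, the set of intersections between the two families of rays, and consider the projection $\pi\colon X\to[-1,1]$. By the third hypothesis each ray is embedded for every $y$, so the arcs $H(\cdot,y)$ and $H'(\cdot,y)$ are simple; moreover they share the common endpoint $H(0,y)=H'(0,y)$, which corresponds to the boundary point $(y,0,0)\in X$. After a small homotopy rel endpoints we may assume that for all but finitely many $y\in[-1,1]$ the arcs $H(\cdot,y)$ and $H'(\cdot,y)$ meet transversally and away from the common endpoint, so that $\pi^{-1}(y)$ is finite; then $X$ is a $1$--manifold with boundary whose boundary points are the exceptional fibres together with the segment $\{(y,0,0)\}$ and the endpoints over $y=\pm1$. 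The function $\#\big(\pi^{-1}(y)\setminus\{(y,0,0)\}\big)$ is locally constant on the complement of the exceptional set, and it changes only when an intersection point runs off to a puncture (i.e.\ $s\to-\infty$ or $s'\to-\infty$) or when two intersection points collide and cancel.

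The key point is that no such change can occur: \emph{minimal position is preserved throughout the homotopy.} This is where I would use the second hypothesis essentially. The surfaces $S-H(0,y)$ are all the same topological surface $S'$ (with one extra puncture), and as $y$ varies the arcs $H(\cdot,y)$, $H'(\cdot,y)$ trace out two homotopies of arcs in $S'$ — except that the puncture $H(0,y)$ itself moves, which we can compensate for by an isotopy of $S'$ so that after this adjustment we genuinely have homotopies of arcs in a fixed punctured surface. Two arcs in a fixed punctured surface are in minimal position if and only if they are taut, equivalently (by the bigon criterion for arcs) they do not form an innermost bigon or a half-bigon at a puncture; and the geometric intersection number is a homotopy invariant. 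Since by hypothesis the arcs are in minimal position at $y=\pm1$, and the geometric intersection number is constant along the homotopy, I claim the actual number of intersection points of $H(\cdot,y)$ with $H'(\cdot,y)$ can never exceed the geometric intersection number, hence never drops below it either at $y=\pm 1$ — so it is exactly the geometric intersection number at both ends, and these agree.

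The cleanest way to run this without worrying about whether intermediate arcs are in minimal position is the following. Let $i(y)$ denote the geometric intersection number (minimum over homotopy classes, computed in $S-H(0,y)$ after the isotopy normalising the puncture) of the homotopy classes of $H(\cdot,y)$ and $H'(\cdot,y)$. Since both families are continuous homotopies, the homotopy class of each arc is literally constant (the puncture-normalising isotopy does not change it), so $i(y)$ is constant, say equal to $m$. At $y=\pm1$ the arcs are in minimal position, so they meet in exactly $m$ points; thus $H(\cdot,-1)$ meets $H'(\cdot,-1)$ in $m$ points and $H(\cdot,1)$ meets $H'(\cdot,1)$ in $m$ points, which is the assertion. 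The main obstacle is the bookkeeping around the common endpoint $H(0,y)=H'(0,y)$: this is a puncture of $S-H(0,y)$, the two arcs emanate from it, and one must be careful that ``minimal position on $S-H(0,y)$'' correctly accounts for (or excludes) the endpoint and that the bigon criterion is applied in the version valid for arcs with endpoints at punctures. I expect the rest — the continuity of the homotopy class and the invariance of geometric intersection number — to be routine once that point is set up correctly.
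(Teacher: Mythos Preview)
Your proposal is correct, and the ``cleanest way'' in your third paragraph is exactly the paper's proof: identify the surfaces $S-H(0,y)$ by pushing the moving puncture, use the embeddedness hypothesis so that $H(\cdot,y)$ and $H'(\cdot,y)$ define genuine homotopies of arcs in this fixed punctured surface, and conclude from homotopy invariance of geometric intersection number together with the minimal-position hypothesis at $y=\pm1$. The first two paragraphs (the $1$--manifold of intersections and the claim that minimal position persists throughout) are an unnecessary detour --- the paper's entire argument is three lines and matches your third paragraph.
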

\begin{proof}
We identify (up to homotopy) the surfaces $S-H(0,y)$ for $y\in [-1,1]$ by pushing the puncture $H(0,y)=H'(0,y)$. The last condition, saying that all the rays are embedded, ensures that the arcs $H(\cdot, y)$ (respectively $H'(\cdot,y)$) on $S-H(0,y)$ can be identified. The assertion follows from the fact that the number of intersection points between arcs in minimal position for $y=-1,1$ is a homotopy invariant.
\end{proof}

\begin{proof}[Proof of Lemma~\ref{lem:slit_disjoint}]
Let $\tau,\tau'$ be tips with distinct $n\in N_\tau, \ n'\in
N_{\tau'}$ and $\nu(n)=\nu(n')$. Note that by the requirement
in Definition~\ref{def:tip} of a tip that $\alpha$ and $\beta$
are consecutive, the slits at $n,n'$ are distinct rays on $S$.
Choose lifts of $N_\tau, N_{\tau'}$ to $\H^2$ so that $n$ is
identified with $n'$. Extend the slits at $n,n'$ to geodesic
rays $\gamma, \gamma'$ from $t,t'$ to $ab, a'b'$. Without loss
of generality suppose that the interval $(t,t')$ at infinity
contains the endpoints of the geodesic extensions of $\gamma,
\gamma'$.

First consider the case where there are $x\in \{a,b\}, x'\in
\{a',b'\}$ that lie in $(t,t')$ so that $x'< x$, see the left
side of Figure~\ref{fig:lem2}. Then the geodesic arcs $tx,
t'x'$ intersect at a point $m$. Let $H(\cdot,\cdot)$ be the
projection to $S$ of the homotopy of geodesic rays joining $tn$
to $tm$ ending in the geodesic segment $nm$. By
Lemma~\ref{lem:slit_embedded}, these rays embed in $S$.
Similarly let $H'(\cdot,\cdot)$ be the projection of the
homotopy joining $t'n$ to $t'm$. Since the rays are geodesic,
the arcs $H(\cdot,y),H'(\cdot,y)$ are in minimal position on
$S-H(0,y)$. By Sublemma~\ref{sublem:intersection}, if the slits
at $n,n'$ intersect outside the endpoint, then the projections
to $S$ of $tx$ and $t'x'$ intersect at least once outside the
projection of $m$, hence at least twice in total, which is a
contradiction with the definition of~$\mathcal{A}$.

The other case is that $a,b'\in [t',t]$ and $b\leq a'\in (t,t')$, see the right side of Figure~\ref{fig:lem2}. If $a=t'$ or $b'=t$, then we have also the second equality, since $\alpha$ and $\beta$ were required to be consecutive in Definition~\ref{def:tip} of a tip. In this situation, let $m$ be any point on the geodesic arc $tt'$. Construct $H,H'$ as before. By Sublemma~\ref{sublem:intersection}, if the slits at $n,n'$ intersect outside the endpoint, then the projections to $S$ of $tm$ and $t'm$ intersect outside the projection of $m$. This contradicts the fact that the projection of $tt'=ta$ is in $\mathcal{A}$, hence simple.

\begin{figure}
\begin{center}
\includegraphics[width=\textwidth]{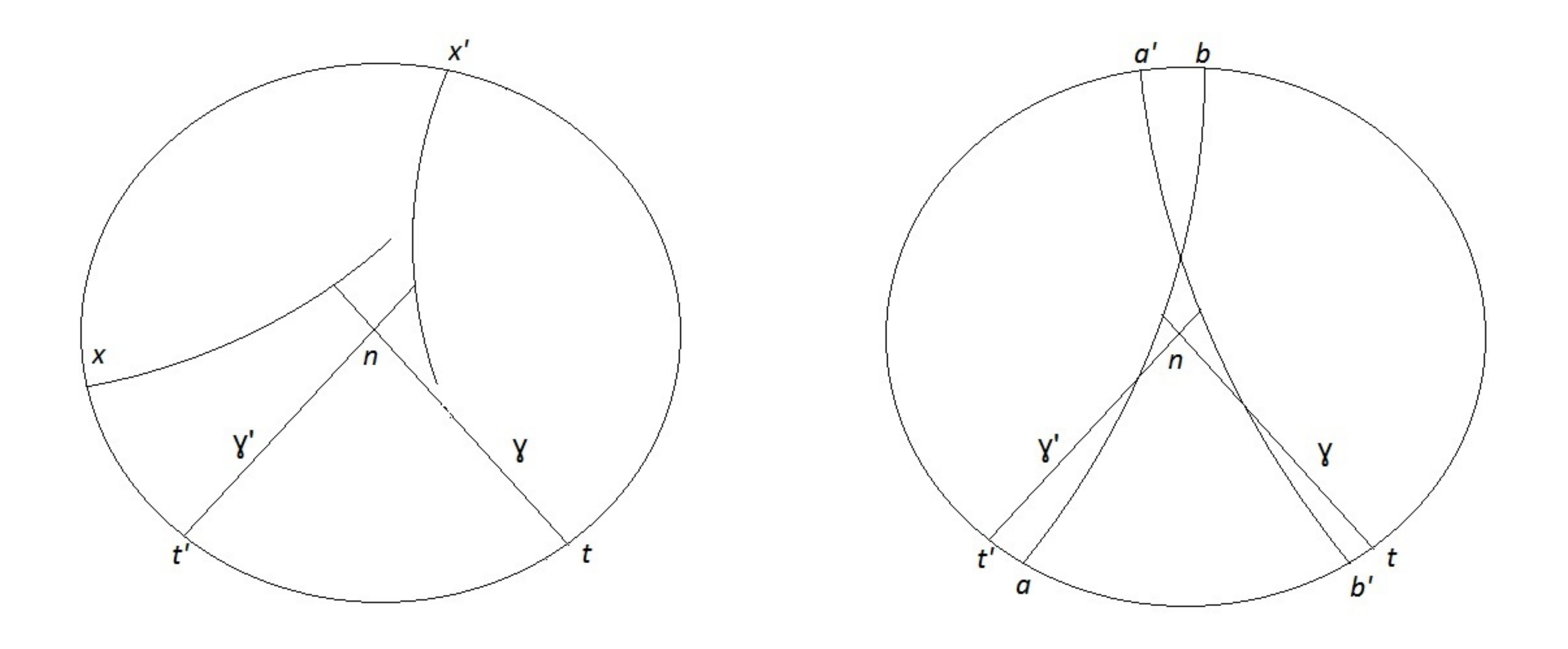}
\end{center}
\caption{Instance of the first case with $x=a, x'=a'$ on the left, and the second case on the right}
\label{fig:lem2}
\end{figure}

Thus we can assume $t'<a<b'<t$. Then the geodesic arcs $ta, t'b'$ intersect. We denote their intersection point by $m$ and apply Sublemma~\ref{sublem:intersection} as in the first case.
\end{proof}

The same proof gives also the following.

\begin{lemma}
\label{lem:multi}
Suppose that the arcs in $\mathcal{A}$ pairwise intersect at most $k\geq 1$ times. If for distinct $n,n'\in N$ we have $\nu(n)=\nu(n')$, then the images in $S$ of slits at $n,n'$ intersect at most $k-1$ times outside the endpoint.
\end{lemma}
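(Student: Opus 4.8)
The plan is to adapt the proof of Lemma~\ref{lem:slit_disjoint} almost verbatim, replacing the single-intersection arguments by counts that track $k$ intersection points. As there, fix tips $\tau,\tau'$ with distinct $n\in N_\tau$, $n'\in N_{\tau'}$ satisfying $\nu(n)=\nu(n')$, pass to lifts in $\H^2$ identifying $n$ with $n'$, and extend the slits to geodesic rays $\gamma$ from $t$ to $ab$ and $\gamma'$ from $t'$ to $a'b'$, arranging that the interval $(t,t')$ at infinity contains the endpoints of the geodesic extensions of $\gamma,\gamma'$. Lemma~\ref{lem:slit_embedded} still applies since it makes no assumption on the intersection number, so the individual slits and the rays involved in the homotopies remain embedded. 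Sublemma~\ref{sublem:intersection} is a homotopy invariance statement, also independent of any bound, so it is available unchanged.

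The core of the argument is the inequality that comes out of Sublemma~\ref{sublem:intersection}. In the first case, where $x\in\{a,b\}$, $x'\in\{a',b'\}$ lie in $(t,t')$ with $x'<x$, the geodesic arcs $tx$ and $t'x'$ meet at a point $m$; building the homotopies $H,H'$ from $tn,t'n$ to $tm,t'm$ as before, Sublemma~\ref{sublem:intersection} tells us that the number of intersections of the slits at $n,n'$ outside their common endpoint equals the number of intersections of the projections of $tx$ and $t'x'$ outside the projection of $m$. Since $m$ is itself an intersection point of those projections, the latter projections meet in total one more time than the slits meet outside the endpoint. By hypothesis the arcs in $\mathcal A$ (hence these two projections, which are sub-arcs lying on arcs of $\mathcal A$, or the same arc) meet at most $k$ times, so the slits meet at most $k-1$ times outside the endpoint. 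The degenerate middle case ($a,b'\in[t',t]$, $b\le a'\in(t,t')$) and the final case $t'<a<b'<t$ are handled exactly as in Lemma~\ref{lem:slit_disjoint}: one again produces a point $m$ on a geodesic arc whose projection lies on an arc of $\mathcal A$, and the same bookkeeping — total intersections of the relevant projections $\le k$, one of which is forced at $m$ — yields at most $k-1$ intersections of the slits away from the endpoint.

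One subtlety to address carefully is that the two slits might be sub-arcs of the \emph{same} arc of $\mathcal A$, or that $tx$ and $t'x'$ might project into the same arc; in that situation "intersect at most $k$ times" must be read as the statement that the relevant arc is simple, i.e. the projections of $tx$ and $t'x'$ can only meet at points where the arc crosses itself, of which there are none, so in fact they meet only at the projection of $m$ and the slits are disjoint outside the endpoint — consistent with, and stronger than, the claimed bound. This is already how the degenerate case is treated in the proof of Lemma~\ref{lem:slit_disjoint} (where simplicity of the projection of $tt'=ta$ is invoked), so no new idea is needed; it just has to be stated. The main obstacle, such as it is, is purely expository: making sure the phrase "the same proof gives also the following" is honest by checking that every place where Lemma~\ref{lem:slit_disjoint} used the bound "at most once" was used only to conclude "at most $1$ in total, one of which is forced, hence $0$ outside the endpoint," and that each such instance degrades gracefully to "at most $k$ in total, one forced, hence at most $k-1$ outside the endpoint." Since the geometry producing the forced intersection point $m$ is identical in all cases, the substitution is routine, and no genuinely new step is required.
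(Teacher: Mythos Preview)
Your proposal is correct and follows exactly the approach the paper intends: the paper's entire proof of Lemma~\ref{lem:multi} is the sentence ``The same proof gives also the following,'' and you have faithfully unpacked that sentence by rerunning the proof of Lemma~\ref{lem:slit_disjoint} with the bookkeeping $k\mapsto k-1$ in place of $1\mapsto 0$. One tiny imprecision: Sublemma~\ref{sublem:intersection} equates the slit intersection count with that of the projections of $tm$ and $t'm$, not of $tx$ and $t'x'$; you then use $tm\subset tx$, $t'm\subset t'x'$ to get an inequality rather than an equality, but since only the upper bound is needed this does not affect the argument.
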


\begin{quest}
A slit can be viewed as an interpolation between the arcs $ta$ and $tb$ of a nib. Is there an alternate proof of Lemmas~\ref{lem:slit_embedded} and~\ref{lem:slit_disjoint} using convexity of the distance function?
\end{quest}

\section{Multiple intersections}
\label{sec:multi}

In this Section we prove Theorem~\ref{thm:multi} and Corollary~\ref{cor:multi}, starting with the lower bound in Theorem~\ref{thm:multi}.

\begin{ex}
\label{ex:multi}
Consider a twice punctured sphere with punctures $p,p'$ viewed as a cylinder. Let $S$ be the surface obtained by putting on each of $k+1$ concentric circles separating $p$ from $p'$ a set of $\frac{|\chi|}{k+1}$ punctures. Let $\alpha$ be an arc joining $p$ to $p'$ crossing each of these concentric circles exactly once.
Consider the set $\mathcal{A}$ of arcs joining $p$ to $p'$ crossing each of these concentric circles exactly once, and disjoint from $\alpha$. Arcs in $\mathcal{A}$ pairwise intersect at most $k$ times, since there might be at most one intersection point in each sub-cylinder bounded by consecutive concentric circles. Each arc in $\mathcal{A}$ is determined by the locations of its intersection points with concentric circles, for each of which there are $\frac{|\chi|}{k+1}+1$ possibilities. Moreover, these locations determine the arc uniquely, except that an arc homotopic to $\alpha$ will be given two times, by the two extreme sets of locations. Thus $$|\mathcal{A}|=\Big(\frac{|\chi|}{k+1}+1\Big)^{k+1}-1.$$
\end{ex}

\begin{proof}[Proof of Theorem~\ref{thm:multi}]
By Example~\ref{ex:multi} it remains to prove the upper bound. We prove it by induction on the number $k$ of allowed intersections. Case $k=0$ is discussed in Remark~\ref{rem:disjoint arcs}. The inductive step from $k=0$ to $k=1$ has been performed in the proof of Theorem~\ref{thm:arc}, and we now generalise it to any $k$. Suppose that we have proved that the maximal cardinality of a set of arcs on $S$ pairwise intersecting at most $k\geq 0$ times is $\leq C(k)|\chi|^{k+1}$ for some constant $C(k)$ depending only on $k$. Let $\mathcal{A}$ be a set of arcs on $S$ pairwise intersecting at most $k+1$ times. They give rise to $2|\mathcal{A}|$ tips. Let $s\in S$. By Lemmas~\ref{lem:slit_embedded} and~\ref{lem:multi} the slits at the preimage $\nu^{-1}(s)$ in the union of all the nibs form a set of simple arcs on $S-s$ pairwise intersecting at most $k$ times. Thus by the inductive assumption the map $\nu$ is at most $C(k)(|\chi|+1)^{k+1}$ to $1$. Hence the area $2|\mathcal{A}|\pi$ of the domain of $\nu$ is bounded by
$$2\pi|\chi|\cdot C(k)(|\chi|+1)^{k+1}\leq 2\pi C(k+1)|\chi|^{k+2},$$ for some $C(k+1)$, as desired.
\end{proof}

\begin{proof}[Proof of Corollary~\ref{cor:multi}]
We orient all the curves. We cut the surface along an arbitrary curve $\alpha\in \mathcal{C}$, possibly separating. The surface $S-\alpha$ is either connected of Euler characteristic $\chi$ or consists of two components with Euler characteristics $\chi_1,\chi_2$ satisfying $\chi_1+\chi_2=\chi$, hence $|\chi_1|^{k+1}+|\chi_2|^{k+1}<|\chi|^{k+1}$.

Let $\mathcal{C}_{\mathrm{int}}$ denote the set of curves in
$\mathcal{C}$ intersecting $\alpha$. Each curve in
$\mathcal{C}_{\mathrm{int}}$ splits into $\leq k$ oriented arcs
on $S-\alpha$. These arcs pairwise intersect at most $k$ times,
hence by Theorem~\ref{thm:multi} there is at most
$C(k)|\chi|^{k+1}$ of them. Each curve in
$\mathcal{C}_{\mathrm{int}}$ can be reconstructed from an
ordered $k'$--tuple of $k'\leq k$ of these arcs, so this gives
$\leq k\big(C(k)|\chi|^{k+1}\big)^k$ possibilities, up to a
power $D^l$ of the Dehn twist $D$ around $\alpha$ with $l\leq
k$. (Note that we do not need to consider partial twists since
the $k'$--tuple of arcs is ordered.) Altogether this yields
$|\mathcal{C}_{\mathrm{int}}|\leq C'(k)|\chi|^{k(k+1)}$.

To bound $|\mathcal{C}-\mathcal{C}_{\mathrm{int}}|$, we need to continue performing the cutting procedure. The number of cutting steps is bounded by the cardinality of the maximal set of disjoint curves on $S$, which equals $|\chi|-1+g<2|\chi|.$ Thus we obtain a polynomial bound of degree $1+k(k+1)$ in $|\chi|$ for $|\mathcal{C}|$.
\end{proof}

\section{Punctured spheres}
\label{sec:punctured}

In this section we prove Theorem~\ref{thm:punctured_main}. We
start with giving the lower bounds. The construction will be
different depending on whether the ending puncture $p'$ is the
same or distinct from the starting puncture $p$.

\begin{ex}
\label{ex:punctured1} Let $S$ be a punctured sphere. We
construct a set of $\frac{1}{2}|\chi|(|\chi|+1)$ arcs on $S$
that are starting and ending at a specified puncture $p$,
pairwise intersecting at most once.

View $S$ as a punctured disc with the outer boundary corresponding to $p$ and other $|\chi|+1$ punctures lying on a circle $c$ parallel to the outer boundary, dividing $c$ into $|\chi|+1$ segments. Up to homotopy, there are $|\chi|+1$ rays joining the centre of the disc with $p$ and intersecting $c$ only once, at $|\chi|+1$ possible segments of $c$. Consider ${{|\chi|+1}\choose 2}$ arcs obtained by merging a pair of such rays. Up to homotopy they pairwise intersect at most once, at the centre of the disc.
\end{ex}

\begin{ex}
\label{ex:puncture2} Let $S$ be a punctured sphere. We
construct a set $\mathcal{A}$ of $\frac{1}{2}|\chi|(|\chi|+1)$
arcs on $S$ that are starting at a specified puncture $p$ and
ending at a distinct specified puncture $p'$, pairwise
intersecting at most once.

Consider a cylinder with boundary components corresponding to $p,p',$ and place other $|\chi|$ punctures on an arc $\alpha$ joining $p$ with $p'$ dividing it into an ordered set of $|\chi|+1$ segments. There are ${{|\chi|+1}\choose 2}$ choices of a pair $\alpha_1<\alpha_2$ of such segments. For each such pair we construct the following arc in $\mathcal{A}$. We start at the starting point of $\alpha$ and follow it on its left avoiding the punctures until we find ourselves on the level of $\alpha_1$. Then we cross $\alpha_1$ and change to the right side of $\alpha$. We stay on the right side until we reach $\alpha_2$, where we switch again to the left side and continue till the ending point of $\alpha$. It is easy to verify that arcs in $\mathcal{A}$ pairwise intersect at most once.
\end{ex}

In the proof of the upper bound in Theorem~\ref{thm:punctured_main} we need the following.

\begin{lemma}
\label{lem_n_points}
Suppose that we have $l$ points on the unit circle in $\R^2$ and a set $\mathcal I$ of pairwise intersecting chords between them. We allow degenerate chords that are single points. Then $|\mathcal{I}|\leq l$.
\end{lemma}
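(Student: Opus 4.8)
I want to prove that $l$ points on a circle supporting a family $\mathcal I$ of pairwise-crossing chords satisfy $|\mathcal I|\le l$. The natural strategy is induction on $l$, finding in each configuration a chord that we can delete together with one of the points, reducing to a smaller instance without destroying the pairwise-intersection property. So the core of the argument is: in any nonempty such family, there is a point of the circle that is an endpoint of at most one chord in $\mathcal I$; removing that point and the (at most one) chord incident to it yields a configuration on $l-1$ points with $\le l-1$ chords, and every remaining chord still passes through the interior, so they still pairwise intersect. Then $|\mathcal I|\le (l-1)+1=l$.

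**Finding such a point.** Label the $l$ points cyclically $v_1,\dots,v_l$. The key observation is that pairwise-crossing chords are highly constrained: if a chord $e$ has endpoints $v_i,v_j$, then every other chord must have one endpoint strictly on each open arc determined by $e$ (or share an endpoint with $e$, or be a degenerate point lying on $e$). In particular, consider two consecutive points $v_i,v_{i+1}$ on the circle (with no other marked point on the short arc between them). Any chord with one endpoint equal to $v_i$ forces every other non-incident chord to reach $v_{i+1}$ or beyond — more precisely, the chords through $v_i$ together with the chords through $v_{i+1}$ already account for all chords not separating $v_i$ from $v_{i+1}$, but a chord separating them would have to cross... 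Let me instead argue directly: look at the $l$ arcs between consecutive marked points. Each chord of $\mathcal I$ "uses up" by crossing, or is incident to, certain arcs; I would count incidences. Actually the cleanest route is: pick any chord $e=v_iv_j\in\mathcal I$ that is \emph{innermost} in the sense that one of its two sides contains the fewest marked points; if that minimal side contains a marked point $v_k$, then every chord of $\mathcal I$ other than $e$ must have an endpoint on that side, hence (since the side has few points) they are forced to share structure — and if the minimal side contains \emph{no} marked point, then $e$ connects two consecutive points $v_i=v_{k}$, $v_j=v_{k+1}$, and then $v_i$ (or $v_j$) is incident only to $e$ among chords of $\mathcal I$: indeed any other chord through $v_i$ would have to cross $e$, impossible since they share the endpoint $v_i$ — wait, chords sharing an endpoint count as intersecting. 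So instead: if $v_i v_{i+1}\in\mathcal I$ with the short arc empty, then every \emph{other} chord of $\mathcal I$ either passes through $v_i$, passes through $v_{i+1}$, or separates $\{v_i,v_{i+1}\}$ — but a separating chord cannot cross $v_iv_{i+1}$ and cannot share an endpoint with it, contradiction. Hence all chords are incident to $v_i$ or to $v_{i+1}$, a pencil-type configuration, which is easily bounded. This bifurcation (empty minimal side vs. nonempty) is exactly the induction I want.

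**The main obstacle.** The delicate point is the degenerate-chord bookkeeping and the case analysis when chords share endpoints, since "pairwise intersecting" permits chords to meet only at a common endpoint rather than in the interior; the separation argument must be phrased so that a shared endpoint counts as an intersection while still forcing the structural rigidity. I expect to handle this by treating a degenerate chord (a single point $v_k$) as "intersecting $e$" iff $v_k$ lies on the closed chord $e$, i.e.\ iff $v_k\in\{v_i,v_j\}$, and then the same separation logic goes through. Once the structural dichotomy is established, the inductive bookkeeping is routine: in the "pencil" case with all chords incident to two consecutive points $v_i,v_{i+1}$, the number of such chords is at most $(l-1)+(l-1) - (\text{double-counted } v_iv_{i+1}) $ minus further crossing constraints, which is comfortably $\le l$; and in the other case we delete the consecutive endpoint that is incident to $e$ alone and induct. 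I would write the induction cleanly around the single statement: \emph{some marked point is an endpoint of at most one chord of $\mathcal I$.}
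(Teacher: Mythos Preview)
Your central claim---that in any nonempty pairwise-intersecting family some marked point is an endpoint of at most one chord of $\mathcal I$---is false. Take $l=5$ with the pentagram chords $\{02,13,24,30,41\}$: one checks that every pair intersects (each pair either shares an endpoint or crosses), yet every point has degree exactly~$2$. The triangle $\{01,12,02\}$ on $l=3$ is another instance. So the induction ``delete a degree-$\le 1$ point together with its unique chord'' cannot be carried out in general. Your write-up only really treats the \emph{pencil case}, where some chord joins two consecutive points $v_i,v_{i+1}$; the pentagram is precisely a configuration with no such chord (every chord skips a point), and your sentence ``they are forced to share structure'' in the non-pencil case is where the argument stops short. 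Even in the pencil case your count ``$(l-1)+(l-1)-1$, comfortably $\le l$'' is $2l-3$, not $\le l$; the actual bound there needs the observation that a chord $v_ia$ and a chord $v_{i+1}b$ with $a,b\notin\{v_i,v_{i+1}\}$ intersect only when $b$ lies on the far side of $a$, forcing $\max\{a\}\le\min\{b\}$ and hence at most $l-1$ non-degenerate chords besides $v_iv_{i+1}$.

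For contrast, the paper's proof avoids induction entirely: it cuts the circle open to identify chords with integer intervals in $\{0,\dots,l-1\}$, observes that pairwise intersection of chords forces pairwise overlap of intervals, applies Helly's theorem in dimension~$1$ to get a common integer point $i$, and then pigeonholes on the half-integer centres of the intervals, which are confined to a set of size~$l$. Two intervals with the same centre are nested, and nested intervals correspond to disjoint chords. If you want to rescue the inductive route, you would need a different reduction step that survives the pentagram-type examples---for instance, proving that either some point has degree $\le 1$ \emph{or} the configuration is already a single ``star polygon'' cycle with exactly $l$ chords---but that is a separate lemma you have not supplied.
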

While this lemma might be well-known, we give a proof found by Marcin Sabok:
\begin{proof}
We label the points cyclically by the set $\{0,\ldots,l-1\}\subset \N$ and assign to each chord the interval of $\R$ with corresponding endpoints.
We keep the notation $\mathcal I$ for this set of intervals. By Helly's Theorem, there is a point $i\in \{0,\ldots,l-1\}$ belonging to all the intervals in $\mathcal I$. The centres of the intervals in $\mathcal I$ lie in the intersection of $\frac{1}{2}\N$ with $[\frac{i}{2},i+\frac{l-1-i}{2}]$, which has cardinality $l$. Thus if $|\mathcal I|\geq l+1$, then by the pigeon principle two intervals in $\mathcal I$ have the same centres. But this means that their corresponding chords are disjoint.
\end{proof}

\begin{proof}[Proof of Theorem~\ref{thm:punctured_main}]
By Examples~\ref{ex:punctured1} and~\ref{ex:puncture2} it
suffices to prove the upper bound, which we do by induction. We
first treat the case where the arcs in $\mathcal{A}$ are
required to start and end at the same puncture $p$.

For $|\chi|=1$ the theorem is trivial as on the
thrice-punctured sphere there is only one essential arc joining
$p$ with itself. Suppose that we have proved the theorem for
smaller $|\chi|$, in particular for the surface $\bar{S}$
obtained from $S$ by forgetting a puncture $r$ distinct from
$p$. For an arc $\alpha\in \mathcal{A}$ on $S$ let
$\bar{\alpha}$ be the corresponding possibly non-essential arc
on $\bar{S}$. Let $\bar{\mathcal{A}}$ denote the union of all
essential $\bar{\alpha}$. We will now analyse to what extent
this map $\mathcal{A}\rightarrow \bar{\mathcal{A}}$ is
well-defined and injective. To this end, we consider the
following \emph{exceptional} arcs in $\mathcal{A}$, and an
associated collection $\mathcal{I}$ of rays on $\bar{S}$ from
$p$ to $r$ and arcs from $p$ to $p$ passing through $r$.

The first instance of an \emph{exceptional} arc $\alpha$ is
when $\bar{\alpha}$ is non-essential. This happens if and only
if $\alpha$ separates $r$ from all other punctures. We include
in $\mathcal{I}$ the unique ray on $\bar{S}$ joining $p$ with
$r$ disjoint from $\alpha$.

Secondly, observe that if $\bar{\alpha}=\bar{\alpha}'$ for
$\alpha\neq\alpha'$, then since $\alpha$ and $\alpha'$
intersect at most once, they are in fact disjoint (up to
homotopy) and bound a bigon with one puncture $r$. There is no
third arc $\alpha''\in\mathcal{A}$ with
$\bar{\alpha}''=\bar{\alpha}$, since at most one component of
$S-{\alpha\cup \alpha'\cup\alpha''}$ contains $r$. We call such
$\alpha$ and $\alpha'$ \emph{exceptional} as well. We include
in $\mathcal{I}$ the unique essential arc on $\bar S$ joining
$p$ with $p$ and passing through $r$ that is disjoint from both
$\alpha$ and $\alpha'$.

Summarising, the map $\mathcal{A}\rightarrow \bar{\mathcal{A}}$
is well-defined and injective outside the set of exceptional
arcs of cardinality $|\mathcal{I}|$, where we avoid only one
exceptional arc from each pair bounding a bigon with puncture
$r$. By the inductive assumption we have
$|\bar{\mathcal{A}}|\leq \frac{1}{2}(|\chi|-1)|\chi|$, so it
suffices to prove $|{\mathcal{I}}|\leq |\chi|$.

\begin{figure}
\begin{center}
\includegraphics[width=\textwidth]{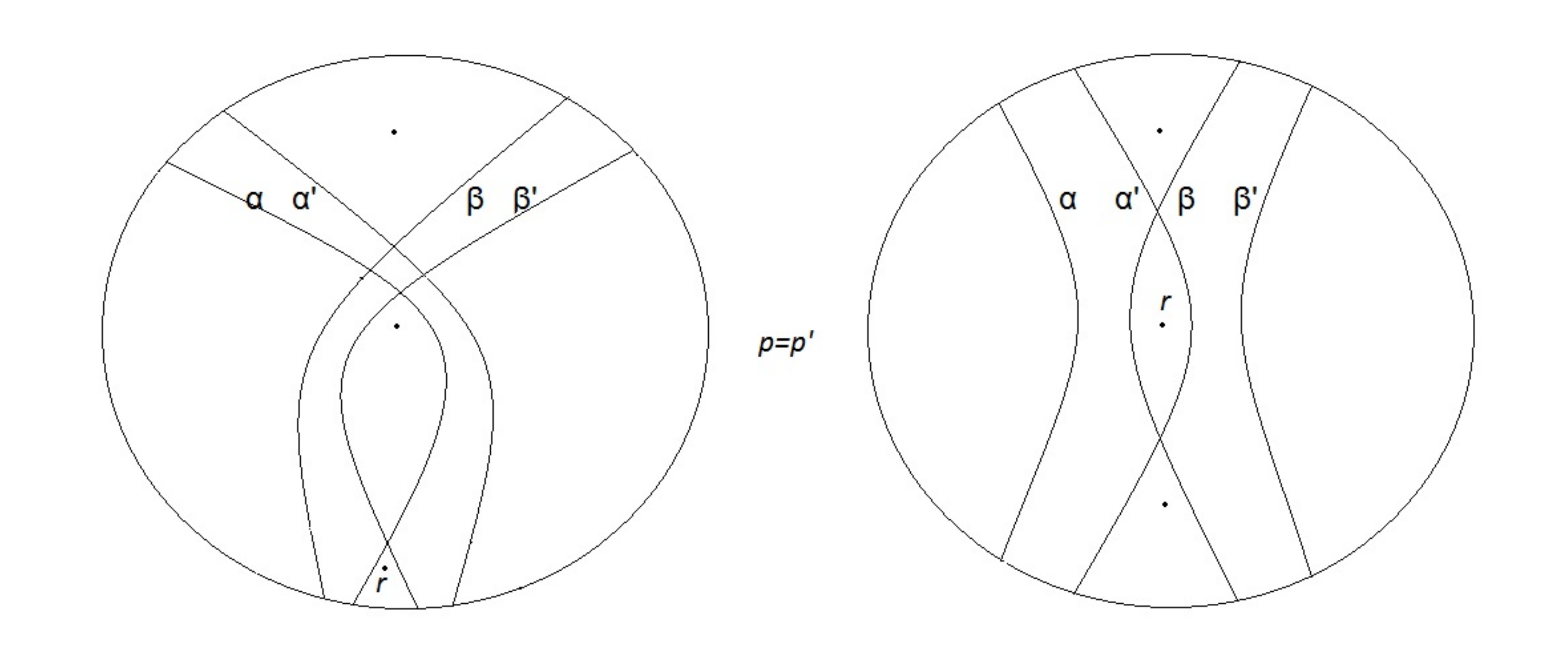}
\end{center}
\caption{On the left why the halves of the arcs in $\mathcal{I}$ are disjoint, on the right why $\mathcal{I}$ satisfies the hypothesis of Lemma~\ref{lem_n_points}}
\label{fig:bigons}
\end{figure}

Let $\mathcal{H}$ be the set of arcs on $S$ obtained from
${\mathcal{I}}$ by reintroducing the puncture $r$ and thus
splitting some arcs of ${\mathcal{I}}$ in half. Note that if
some halves coincide, we keep only one copy in $\mathcal{H}$.
Observe that the arcs in $\mathcal{H}$ are disjoint, since if
we had two intersecting halves of two arcs in $\mathcal{I}$,
then we could choose two corresponding exceptional arcs
intersecting at least twice. See the left side of
Figure~\ref{fig:bigons}. Arcs in $\mathcal{H}$ connect $r$ with
$p$, hence their complementary components are punctured bigons,
thus of area $\geq 2\pi$. Hence there are at most $|\chi|$ such
components and thus $|\mathcal{H}|\leq |\chi|$. We now
intersect $\mathcal{H}$ with a small circle centred at $r$.
Each arc in $\mathcal I$ is determined by a pair of points or a
point of this intersection, and we connect them by a chord.
These chords satisfy the hypothesis of
Lemma~\ref{lem_n_points}, since otherwise we could choose two
corresponding exceptional arcs intersecting at least twice. See
the right side of Figure~\ref{fig:bigons}. By
Lemma~\ref{lem_n_points} we have $|{\mathcal{I}}|\leq |\chi|$,
which finishes the proof in the case where $p'=p$.

We now consider the case where $p'\neq p$. The argument is the
same, with the following modifications. The puncture $r$ that
we are forgetting is required to be distinct from both $p$ and
$p'$. All $\bar{\alpha}$ are essential, since they connect
different punctures. However, it is no longer true that if
$\bar{\alpha}=\bar{\alpha}'$ for $\alpha\neq\alpha'$, then
$\alpha$ and $\alpha'$ are disjoint. Nevertheless, it is easy
to see that they may intersect only in the configuration
illustrated in Figure~\ref{fig:trigon}. In that case all arcs
in $\mathcal{A}$ are disjoint from arc $\alpha''$ from
Figure~\ref{fig:trigon}, thus without loss of generality we can
assume $\alpha''\in \mathcal{A}$. We then have
$\bar{\alpha}''=\bar{\alpha}$. Moreover, there are no other
arcs in $\mathcal{A}$ with the same $\bar{\alpha}$. We call
both $\alpha,\alpha'$ \emph{exceptional} and associate to them
the bigons that they bound with~$\alpha''$ containing~$r$.

\begin{figure}
\begin{center}
\includegraphics[width=0.45\textwidth]{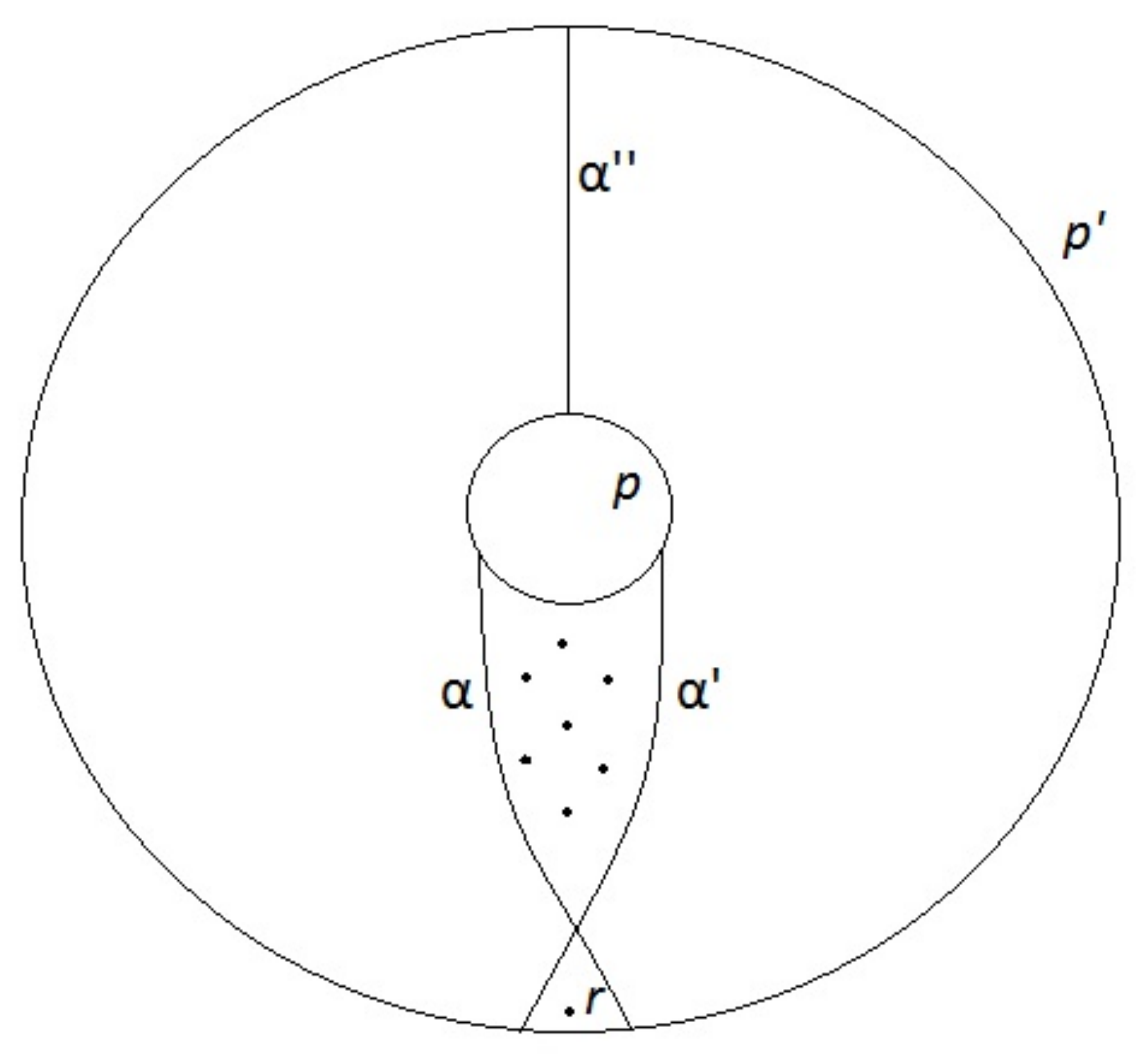}
\end{center}
\caption{The unique configuration of three arcs with common $\bar{\alpha}$}
\label{fig:trigon}
\end{figure}

We construct $\mathcal{I}$ and $\mathcal{H}$ as before.
However, as pointed out to us by one of the referees, among the
complementary components of $\mathcal{H}$, except for the
punctured bigons, there is a square, possibly with no
punctures, hence also only of area $\geq 2\pi$. Thus we can
only conclude with $|\mathcal{H}|\leq |\chi|+1$. We again
intersect $\mathcal{H}$ with a small circle centred at $r$. We
assign chords to arcs in $\mathcal I$ as before. Note that the
points of the intersection of the circle with $\mathcal{H}$,
depending on whether the arc in $\mathcal{H}$ joins $r$ to $p$
or $p'$, are partitioned in two families
$\mathcal{Q},\mathcal{Q'}$ of consecutive points. We consider
an additional chord joining the two outermost points of
$\mathcal{Q}$. Since each of the other chords connects a point
in $\mathcal{Q}$ to a point in $\mathcal{Q}'$, it intersects
the additional chord. Hence the chords satisfy the hypothesis
of Lemma~\ref{lem_n_points}. Thus $|\mathcal I|+1\leq
|\chi|+1$, as desired.
\end{proof}

\begin{quest}
Is there a way to make the proof of Theorem~\ref{thm:arc} work to prove Theorem~\ref{thm:punctured_main} or vice-versa?
\end{quest}

\begin{bibdiv}
\begin{biblist}

\bib{A}{article}{
   author={Aougab, Tarik},
   title={Constructing large $k$--systems on surfaces},
   journal={Topol. Appl.},
   date={2014}}

\bib{H}{article}{
   author={Harer, John L.},
   title={The virtual cohomological dimension of the mapping class group of
   an orientable surface},
   journal={Invent. Math.},
   volume={84},
   date={1986},
   number={1},
   pages={157--176}}

\bib{JMM}{article}{
   author={Juvan, Martin},
   author={Malni{\v{c}}, Aleksander},
   author={Mohar, Bojan},
   title={Systems of curves on surfaces},
   journal={J. Combin. Theory Ser. B},
   volume={68},
   date={1996},
   number={1},
   pages={7--22}}

\bib{L}{article}{
   author={Leininger, Christopher},
   date={2011},
   title={personal communication}}

\bib{MRT}{article}{
   author={Malestein, Justin},
   author={Rivin, Igor},
   author={Theran, Louis},
   title={Topological designs},
   journal={Geom. Dedicata},
   volume={168},
   date={2014},
   number={1},
   pages={221--233}}

\bib{P}{article}{
   author={Parlier, Hugo},
   title={Kissing numbers for surfaces},
   journal={J. Topol.},
   volume={6},
   date={2013},
   number={3},
   pages={777--791}}

\bib{S}{article}{
   title={A remark about the curve complex},
   author={Suoto, Juan}
   date={2007}
   eprint={http://www.math.ubc.ca/~jsouto/papers/d-curve.pdf}}

\end{biblist}
\end{bibdiv}

\end{document}